\pdfoutput=1
\documentclass{amsart}
\usepackage{booktabs}
\usepackage{amssymb}
\makeindex

\usepackage{hyperref}
\hypersetup{
    colorlinks,%
    citecolor=blue,%
    filecolor=blue,%
    linkcolor=blue,%
    urlcolor=blue
}

 \usepackage[all]{xy}
\usepackage{tikz}
\usepackage{todonotes}
\usepackage[backrefs,lite]{amsrefs}
\usepackage{enumerate}
\usepackage{verbatim}
\usepackage{multirow}
\usepackage{pgf,tikz}
\usetikzlibrary{arrows}
\usepackage{tkz-graph}
\usepackage{mathtools}

\CompileMatrices

\newtheorem{theorem}{Theorem}[section]
\newtheorem{lemma}[theorem]{Lemma}

\newtheorem{proposition}[theorem]{Proposition}
\newtheorem{corollary}[theorem]{Corollary}
\theoremstyle{definition}
\newtheorem{definition}[theorem]{Definition}
\newtheorem{example}[theorem]{Example}

\newtheorem{remark}[theorem]{Remark}

\def\cc{{\mathbb C}}

\def\zz{{\mathbb Z}}
\def\rr{{\mathbb R}}

\def\qq{{\mathbb Q}}
\def\pp{{\mathbb P}}

\def\Osh{{\mathcal O}}

\def\Spec{\operatorname{Spec}} 
\def\rk{{\rm rk}}


\def\Cl{\operatorname{Cl}}

\begin{document}
\title[]{Quasismooth hypersurfaces in toric varieties}

\author{Michela Artebani}
\address{
Departamento de Matem\'atica, \newline
Universidad de Concepci\'on, \newline
Casilla 160-C,
Concepci\'on, Chile}
\email{martebani@udec.cl}

\author{Paola Comparin}
\address{
Departamento de Matem\'atica y Estad\'istica, \newline
Universidad de La Frontera, \newline
Av. Francisco Salazar 1145,
Temuco, Chile}
\email{paola.comparin@ufrontera.cl} 

\author{Robin Guilbot}
\address{Faculty of Mathematics, \newline
Computer Science and Mechanics,\newline
University of Warsaw, \newline
ul. Banacha 2, 02-097 Warszawa, Poland }
\email{rguilbot@math.cnrs.fr}

\subjclass[2010]{14M25, 14J32, 14J17, 32S25}
\keywords{Toric variety, quasismooth, Newton polytope, Calabi-Yau variety} 

\thanks{The first author has been partially 
supported by Proyecto Fondecyt Regular N. 1130572 
and Proyecto Anillo ACT 1415 PIA Conicyt, 
the second author has been partially 
supported by Proyecto Fondecyt Postdoctorado N. 3150015 and Proyecto Anillo ACT 1415 PIA Conicyt, 
the third author was supported by the NCN project 2013/08/A/ST1/00804.}

\begin{abstract} 
We provide a combinatorial characterization of monomial linear systems on toric varieties 
whose general member is quasismooth. This is given  both in terms of the Newton polytope 
and in terms of the matrix of exponents of a monomial basis.
\end{abstract}
\maketitle

\section*{Introduction}
Let $X$ be a normal projective toric variety over $\cc$ with 
homogeneous coordinate ring $R(X)$.
According to Cox's construction, there exists a GIT quotient  
$p:\hat X\to X$ by the action of a quasi-torus, 
where $\hat X$ is an open subvariety of $\Spec R(X)$ 
obtained removing a closed subset of codimension at least two, 
the irrelevant locus \cite[Chapter 5]{CLS}.
This description allows to describe the geometry of $X$ 
by means of homogeneous coordinates, as for the usual projective space.
In particular, a hypersurface $Y$ of $X$ can be defined as (the image by $p$ of) 
the zero set of a homogeneous element $f\in R(X)$.
Such hypersurface $Y$ is  called {\em quasismooth} if $V(f)$ is smooth in $\hat X$,
i.e. the singular locus of $f$ is contained in the irrelevant locus.
This implies that the singularities of $Y$ are induced by those of the ambient space, 
that is they are due to the isotropy in the action of the quasi-torus on $\hat X$.
In particular $Y$ is smooth if $X$ is smooth.

To the authors' knowledge, the concept of quasismoothness 
first appeared in the work of Danilov \cite{Da2} and later in the work of Batyrev and Cox \cite{BC} 
for simplicial toric varieties. Under such hypothesis $X$ has abelian quotient singularities and $p$ 
is a geometric quotient. In \cite[Proposition 3.5]{BC} it is proved that if $X$ is simplicial, 
then a hypersurface $Y$ is quasismooth if and only if it is a $V$-submanifold of $X$. 
In particular $Y$ has abelian quotient singularities as well. 
In the general case, by \cite{Bo}, quasismooth hypersurfaces have rational singularities.

Our motivation for the study of this regularity condition is the fact that 
quasismooth hypersurfaces with trivial canonical class give examples of 
Calabi-Yau varieties with canonical singularities \cite{ACG}.
In fact, quasismoothness appears as a regularity condition in 
the literature on mirror symmetry, for example it is required 
for the Berglund-H\"ubsch-Krawitz duality \cite{BH}, \cite{Kr}. 

A characterization of quasismooth hypersurfaces in weighted projective spaces 
is given in \cite{kreuzerskarke} (see also \cite[Theorem 8.1]{F}).  As a consequence, this gives a nice description  
of quasismooth polynomials having the same number of monomials as variables 
(called of {\em Delsarte type}), see Corollary \ref{cor-simpl}.

In this paper we define and study quasismoothness for hypersurfaces in any normal toric variety $X$. 
Given a monomial linear system $\mathcal L$ on $X$,  we provide combinatorial conditions 
for the quasismoothness of a general member $Y$ of $\mathcal L$
in terms both of its Newton polytope and of the matrix of exponents of a defining 
equation of $Y$ in homogeneous coordinates.

The paper is organized as follows. In section \ref{sec-pre} we provide some background on the Cox construction 
for toric varieties and on monomial linear systems.
Section \ref{sec-qs} contains the definition and some preliminary observations on quasismoothness.
In section \ref{sec-newt} we characterize quasismoothness in terms of a geometric condition on 
the Newton polytope of the linear system, based on results by Khovanskii.
In section \ref{sec-matrix} we characterize quasismoothness in terms of the matrix of exponents of the linear system,
translating the previous result into a linear algebra condition.
In section \ref{sec-wps} we recover the known results on quasismoothness in fake weighted projective spaces.
In section \ref{sec-cy} we use quasismoothness to define families of Calabi-Yau hypersurfaces associated 
to good pairs of polytopes (see \cite{ACG}) and we discuss the behavior of quasismoothness under polar duality.
Finally, section \ref{sec-app} contains some applications of the previous results for hypersurfaces of low dimension.

\vspace{0.3cm}

\noindent {\em Acknowledgments.}
We would like to thank  Antonio Laface for several enlightening discussions and the anonymous referee for 
his/her comments.

\section{Preliminaries} \label{sec-pre}
In this section we recall some basic facts about the Cox ring of a toric variety 
and we recall the construction due to D.~Cox which presents any  toric variety 
as a GIT quotient of an open subset of an affine space by the action of a quasi-torus.

Let $X=X_{\Sigma}$ be a toric variety associated to a fan $\Sigma$ in $N_\qq$,
where $N$ is a lattice. 
As usual we denote by $\Sigma(1)$ the set of one dimensional cones in $\Sigma$,
which are in bijection with the integral torus-invariant divisors of $X$.
The {\em Cox ring}, or {\em homogeneous coordinate ring}, of $X$ is the  
polynomial ring 
\[
R(X)=\cc[x_\rho: \rho\in \Sigma(1)],
\] 
graded by the divisor class group $\Cl(X)$ in the natural way:
the degree of $x_{\rho}$ is the class in $\Cl(X)$ 
of the integral torus-invariant divisor corresponding to $\rho$.

Let $\bar X:={\rm Spec}\,  R(X)\cong \cc^{|\Sigma(1)|}$,
$J$ be the {\em irrelevant ideal} in $R(X)$, i.e. 
the ideal generated by the monomials 
\[
x^{\hat \sigma}:=\prod_{\rho\not\in \sigma(1)}x_\rho,\quad \sigma\in \Sigma,
\]
and $\hat X:=\bar X-V(J)$. 
The ${\rm Cl}(X)$-grading of $ R(X)$ induces an action of the quasi-torus 
 $G={\rm Spec}\, \cc[{\rm Cl}(X)]$ on $\bar X$ which preserves $\hat X$.
Moreover, there exists a morphism 
\[
p:\hat X\to X,
\] 
called {\em characteristic map}, which is a GIT quotient 
by the action of $G$ \cite[]{CLS}.
The quotient is geometric if and only if $\Sigma$ is simplicial \cite[Theorem 5.1.11]{CLS}.

Given a hypersurface $Y$ of $X$, we define its {\em pull-back} $p^*(Y)$ in $\hat X$ 
in the following way.
Let $X^0$ be the smooth locus of $X$, whose complement has codimension 
at least two in $X$ since $X$ is normal, and let $Y^0=Y\cap X^0$.
Since $Y^0$ is a Cartier divisor one can consider the usual pullback $(p_{|p^{-1}(X^0)})^*(Y^0)$ 
obtained composing  the local defining functions of $Y^0$ with $p$.
We define $p^*(Y)$ to be the closure of $p^*(Y^0)$ in $\hat X$.

Given a linear system $\mathcal L$ on $X$ let $p^*\mathcal L$ be its pull-back,
obtained taking the pull-back of each element of $\mathcal L$.
Observe that  $p^*\mathcal L$ is a linear system on $\hat X$ since 
the complement of $p^{-1}(X^0)$ has codimension at least two in $\hat X$.
Moreover $p^*\mathcal L$ is associated to a homogeneous subspace $V_{\mathcal L}$ of $R(X)$:
\[
p^*\mathcal L=\{{\rm div}(f): f\in V_{\mathcal L} \}.
\]
Let $B(\mathcal L)$ and $B^*(\mathcal L)$ be the base loci of $\mathcal L$ and 
$p^*\mathcal L$ respectively.  Clearly $p(B^*(\mathcal L))\subseteq B(\mathcal L)$.

A linear system $\mathcal L$ on $X$ will be called {\em monomial linear system} 
if the associated subspace $V_{\mathcal L}\subset R(X)$ is generated by monomials of the same degree in the variables $x_\rho, \rho\in\Sigma(1)$.

We define the  {\em Newton polytope} $\Delta(\mathcal L)$ of a monomial linear system $\mathcal L$
to be the convex hull  of the set of exponents of a monomial basis of $V_{\mathcal L}$ 
in $\rr^{|\Sigma(1)|}$.  Observe that $\Delta(\mathcal L)$ is contained in the intersection of the positive 
orthant with the affine space $Q^{-1}(w)$, where $w\in \Cl(X)$ is the degree of 
the elements in $V_{\mathcal L}$ and 
\[
Q:\rr^{|\Sigma(1)|}\to \Cl(X)
\]
 is the homomorphism which associates 
to $e_\rho$ the degree of $x_\rho$.

If $\mathcal L$ is a monomial linear system, then 
the base locus of $p^*\mathcal L$ is the vanishing locus of all the monomials generating $V_{\mathcal L}$.
In particular it is a union of strata of the following form
\[
D_{\sigma}=\{x\in \hat X: x_{\rho}=0, \rho\in \sigma(1)\},
\]
where $\sigma\in \Sigma$. The following result implies that monomial linear systems 
with the same Newton polytope have the same base locus.

\begin{lemma}\label{bs}
Let $\mathcal L$ be a monomial linear system on a complete toric variety and $\Delta(\mathcal L) \subset \rr^{|\Sigma(1)|}$ be 
its Newton polytope.
Then $B^*(\mathcal L)$ is the vanishing locus of all the monomials $x^a=\prod_{\rho \in \Sigma(1)} x_{\rho}^{a_\rho}$, where 
$a$ is a vertex of $\Delta(\mathcal L)$.
\end{lemma}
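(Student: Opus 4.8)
The plan is to confront two descriptions of the base locus. By the observation recalled just before the statement, $B^*(\mathcal L)$ is exactly the common vanishing locus in $\hat X$ of the monomials $x^a$, $a\in A$, where $A\subset\rr^{|\Sigma(1)|}$ is the set of exponents of the chosen monomial basis of $V_{\mathcal L}$, so that $\Delta(\mathcal L)=\Conv(A)$. Since every vertex of the convex hull of a finite set lies in that set, the vertices of $\Delta(\mathcal L)$ form a subset of $A$; hence each vertex monomial is among the generators of $V_{\mathcal L}$, and any point of $B^*(\mathcal L)$ annihilates it. This gives the easy inclusion $B^*(\mathcal L)\subseteq V(x^a: a \text{ a vertex of }\Delta(\mathcal L))$.

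For the reverse inclusion it is enough to show that, at a point $x\in\hat X$ where all vertex monomials vanish, every generator $x^a$ with $a\in A$ also vanishes. The key point is the following support statement: for every $a\in\Delta(\mathcal L)$ there is a vertex $v$ of $\Delta(\mathcal L)$ with $\mathrm{supp}(v)\subseteq\mathrm{supp}(a)$, where $\mathrm{supp}(b)=\{\rho\in\Sigma(1): b_\rho\neq 0\}$. To see this, set $S=\mathrm{supp}(a)$ and intersect $\Delta(\mathcal L)$ with the coordinate subspace $L_S=\{y: y_\rho=0 \text{ for all }\rho\notin S\}$. Because $\Delta(\mathcal L)$ lies in the nonnegative orthant, each hyperplane $\{y_\rho=0\}$ is a supporting hyperplane, so $\Delta(\mathcal L)\cap L_S$ is a face of $\Delta(\mathcal L)$; it is nonempty since it contains $a$. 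A nonempty face of a polytope contains a vertex of that face, which is also a vertex $v$ of $\Delta(\mathcal L)$, and by construction $\mathrm{supp}(v)\subseteq S=\mathrm{supp}(a)$.

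Granting this, suppose $x\in\hat X$ satisfies $x^v=0$ for every vertex $v$, and fix $a\in A$. Choosing $v$ as above, the relation $x^v=\prod_{\rho}x_\rho^{v_\rho}=0$ forces $x_{\rho_0}=0$ for some $\rho_0\in\mathrm{supp}(v)\subseteq\mathrm{supp}(a)$; since $\rho_0\in\mathrm{supp}(a)$, the variable $x_{\rho_0}$ appears with positive exponent in $x^a$, so $x^a=0$. Thus $x$ lies in the common zero locus of all generators of $V_{\mathcal L}$, i.e.\ $x\in B^*(\mathcal L)$, which proves $V(x^a: a \text{ a vertex})\subseteq B^*(\mathcal L)$ and hence equality. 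I expect the only step needing a little care to be the face argument of the second paragraph — checking that a coordinate-subspace slice of the Newton polytope is a face and extracting from it a vertex of $\Delta(\mathcal L)$ with the desired support; the rest is routine manipulation of monomial ideals.
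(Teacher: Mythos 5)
Your proof is correct, and it follows the same overall strategy as the paper's: reduce the statement to the set-theoretic equality between the common zero locus of all basis monomials and that of the vertex monomials, get the easy inclusion from the fact that vertices of $\Conv(A)$ belong to $A$, and get the reverse inclusion from a support-containment statement exploiting that $\Delta(\mathcal L)$ sits in the nonnegative orthant. The one step where you diverge is how that support statement is established. The paper writes each basis exponent $b$ as a convex combination $b=\sum_i\lambda_i a_i$ of vertices with strictly positive coefficients and deduces $\mathrm{supp}(a_i)\subseteq\mathrm{supp}(b)$ for every vertex in the combination; you instead slice $\Delta(\mathcal L)$ by the coordinate hyperplanes $\{y_\rho=0\}$, $\rho\notin\mathrm{supp}(b)$, observe that each such hyperplane is supporting, so the slice is a nonempty face containing $b$, and extract a single vertex $v$ of $\Delta(\mathcal L)$ with $\mathrm{supp}(v)\subseteq\mathrm{supp}(b)$. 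Both arguments are valid; the paper's is shorter, while yours avoids invoking the positive-coefficient convex representation and produces one explicit vertex, which is all the monomial-vanishing argument needs. (The degenerate case where the vertex or the exponent is the zero vector, i.e.\ the constant monomial, makes the relevant vanishing locus empty and is harmless in either version.)
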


\begin{proof}
Let $S$ be the set of exponents of a monomial basis of $V_{\mathcal L}$. 
It is sufficient to show that the vanishing locus of all the monomials $x^b$ for $b \in S$ 
is the vanishing locus of all the monomials $x^a$ for $a$ a vertex of $\Delta(\mathcal L)$. 
One inclusion is obvious, let us prove the other. For all $b \in S$ there exist vertices $a_1, \ldots, a_k$ of $\Delta(\mathcal L)$ 
and positive rational numbers $\lambda_1, \ldots, \lambda_k \in \ (0,1] \cap \qq$ such that $\sum_{i=1}^k \lambda_i a_i=b$ 
and $\sum_{i=1}^k \lambda_i =1$. It follows that all the variables appearing in the $x^{a_i}$'s also appear in $x^b$. 
So the vanishing of all the $x^{a_i}$'s implies the vanishing of $x^b$, which proves the second inclusion.
\end{proof}

\section{Quasismoothness}\label{sec-qs}
\begin{definition}
Let $X=X_\Sigma$ be a toric variety and $p:\hat X\to X$ be its characteristic map.

A hypersurface $Y\subset X$ is  {\em quasismooth at a point} $y\in Y$ if $p^*(Y)$ is smooth 
at $p^{-1}(y)\cap p^*(Y)$; 
it is {\em quasismooth}
if $p^*(Y)\subset \hat X$ is smooth. 
\end{definition}

Observe that $p^*(Y)$ is the zero set of a homogeneous element $f_Y\in R(X)$ in $\hat X$,
since it is $G$-invariant. We will denote by $S(f_Y)$ the singular locus of $f_Y$ in $\bar X$,
that is the zero set of al partial derivatives of $f_Y$ in $\bar X$.
Thus $Y$ is quasismooth if and only if $S(f_Y)$ is contained
in the irrelevant locus $V(J)$ of $X$.

We start considering the behaviour of quasismoothness in linear systems.

\begin{proposition} \label{zariski}
Let $\mathcal L$ be a linear system of a complete toric variety $X_\Sigma$.
The set of quasismooth elements in $\mathcal L$ is an open Zariski subset 
of $\mathcal L$.   
\end{proposition}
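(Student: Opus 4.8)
The plan is to realize the quasismooth locus as the complement of the image of an auxiliary projection, and then invoke the fact that the image of a projective morphism is closed. First I would fix a basis $f_1,\dots,f_m$ of $V_{\mathcal L}$, so that a general element of $\mathcal L$ is $f_t=\sum_{i=1}^m t_i f_i$ with $t=(t_1:\cdots:t_m)\in\pp^{m-1}=\mathcal L$. Consider the incidence variety
\[
Z=\{(t,x)\in \mathcal L\times \bar X : f_t(x)=0,\ \partial_\rho f_t(x)=0 \text{ for all }\rho\in\Sigma(1)\},
\]
which is closed in $\mathcal L\times\bar X$ since it is cut out by the bihomogeneous equations $f_t(x)=0$ and $\partial_\rho f_t(x)=0$ (each linear in $t$). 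By definition, $f_t$ has a singular point outside the irrelevant locus precisely when the fiber of $Z$ over $t$ meets $\hat X=\bar X\setminus V(J)$. So I would instead work with $Z'=Z\cap(\mathcal L\times \hat X)$, and the non-quasismooth locus in $\mathcal L$ is exactly the image of $Z'$ under the first projection $\pi:\mathcal L\times\hat X\to\mathcal L$.

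The key point is then a properness argument. The projection $\mathcal L\times\bar X\to\mathcal L$ is not proper, so I cannot directly conclude that the image of $Z$ is closed; the subtlety is precisely that a sequence of singular points could "escape to infinity" in $\bar X$ or "into the irrelevant locus." To handle this I would instead compactify: let $\pi:\mathcal L\times\pp(\bar X\oplus\cc)\to\mathcal L$ (or any convenient projective closure of $\bar X$) and take the closure $\bar Z$ of $Z$ in $\mathcal L\times\pp^{N}$; since $\bar Z$ is projective over $\mathcal L$, its image $\pi(\bar Z)$ is Zariski closed in $\mathcal L$. The non-quasismooth locus of $\mathcal L$ is contained in $\pi(\bar Z)$, but this is not yet an equality, so more care is needed — this is the main obstacle. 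The cleanest fix is a dimension/semicontinuity argument rather than a closure argument: the function $t\mapsto \dim\big(S(f_t)\cap \hat X\big)$ (with the convention $\dim\emptyset=-\infty$) is upper semicontinuous on $\mathcal L$ by the standard semicontinuity of fiber dimension for the proper map $\bar Z\to\mathcal L$ intersected with the open set $\mathcal L\times\hat X$; hence the set where this dimension is $\ge 0$, i.e. where $S(f_t)\cap\hat X\ne\emptyset$, is closed. Wait — semicontinuity of fiber dimension gives closedness of $\{t:\dim \ge k\}$ for the \emph{proper} map, and intersecting with the open $\hat X$ can only remove points, so one must argue that the locus $\{t : S(f_t)\cap\hat X\ne\emptyset\}$ is itself the image of the quasi-projective $Z'$, and conclude via Chevalley that it is constructible, then use that it is also stable under specialization to get closedness.

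Concretely, I would carry out the steps in this order: (1) set up the incidence variety $Z'\subset \mathcal L\times\hat X$ with its closed defining equations and identify the non-quasismooth locus as $\pi(Z')$; (2) observe $\pi(Z')$ is constructible by Chevalley's theorem; (3) show $\pi(Z')$ is closed under specialization — if $t_0$ is in the closure of $\pi(Z')$, pick a curve in $\mathcal L$ through $t_0$ whose general point lies in $\pi(Z')$, lift to a curve in $Z'$ after base change, and use that $\bar X\setminus V(J)$ — or rather a suitable valuative/properness replacement — lets the limiting singular point stay off $V(J)$, using here that $V(J)$ is the base locus $B^*(\mathcal L)$ only in the monomial case, so in general I instead use that the limit point automatically satisfies all the derivative equations and the quasismoothness condition is about $\hat X$; (4) conclude by the fact that a constructible set stable under specialization is closed. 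I expect step (3) to be the crux: one needs to ensure the limiting singular point does not run into the irrelevant locus, and the honest way to guarantee this is to note that if it did, the limit $f_{t_0}$ would still be quasismooth since quasismoothness only concerns $\hat X=\bar X\setminus V(J)$, so there is in fact nothing to rule out — the limit point lying in $V(J)$ simply means $t_0$ need not be in $\pi(Z')$, contradicting our choice, unless the curve in $Z'$ has a limit inside $\mathcal L\times\hat X$. This dichotomy, handled by the valuative criterion applied to a resolution of the curve, closes the argument.
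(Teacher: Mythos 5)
There is a genuine gap, and it sits exactly where you yourself predict the crux to be: step (3), stability of $\pi(Z')$ under specialization. Your concluding ``dichotomy'' is circular. You take $t_0$ in the closure of $\pi(Z')$, lift a curve into $Z'$, and then say that if the limiting singular point falls into $V(J)$ (or runs off to infinity in $\bar X$) then ``$t_0$ need not be in $\pi(Z')$, contradicting our choice'' --- but there is no contradiction: your choice was only that $t_0$ is a limit of non-quasismooth members, and membership of $t_0$ in $\pi(Z')$ is precisely what has to be proved. Nothing in your argument prevents the singular points $x_s\in\hat X$ of $f_{t_s}$ from degenerating into the irrelevant locus as $t_s\to t_0$; note that for an arbitrary open subset $U\subset\bar X$ the locus $\{t: S(f_t)\cap U\neq\emptyset\}$ is in general not closed, so some specific feature of $\hat X$ must enter. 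Chevalley gives constructibility for free, but the entire content of the proposition is the specialization step, which your proposal leaves open.

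The missing input --- and the paper's actual argument --- is the characteristic map $p:\hat X\to X$. The equations $f_t=\partial f_t/\partial x_\rho=0$ are $\Cl(X)$-homogeneous, so the incidence set $S\subset\hat X\times\mathcal L$ is invariant under the quasi-torus $G$ acting on the first factor; since $p$ is a good quotient, $(p\times\mathrm{id})(S)$ is a closed (hence complete) subset of $X\times\mathcal L$, because $X$ is complete. Its image under the projection to $\mathcal L$ coincides with $\pi(S)$, i.e.\ with the non-quasismooth locus, and is therefore Zariski closed, so the quasismooth locus is open. In the language of your proposal: push your curve of singular points down to $X$ by $p$, use completeness of $X$ to take the limit there, and then use closedness of the image of the $G$-invariant set $S$ to lift that limit back to a singular point in $\hat X$ over $t_0$; this is what rules out the escape into $V(J)$ that your valuative argument cannot control.
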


\begin{proof}
Let $p:\hat X\to X$ the characteristic map of $X$ and let $r=|\Sigma(1)|$. 
We will denote by $f_Y\in R(X)$ a defining polynomial for a hypersurface $Y$ of $X$,
as explained above.
Consider the set 
\[
S=\{(x,Y)\in \hat X\times \mathcal L: f_Y(x)=\frac{\partial f_Y}{\partial x_j}(x)=0,\ j=1,\dots,r\}
\]
and the projection $\pi:S\to \mathcal L$. The fiber over $Y\in \mathcal L$ is empty 
exactly when $Y$ is quasismooth. Observe that $\pi$ factors through 
$\bar\pi: (p\times{\rm id})(S)\to \mathcal L$.
Since $ (p\times{\rm id})(S)$ is complete, then the image of $\bar\pi$ is Zariski closed in $\mathcal L$  \cite[Exercise 4.4, Chapter II]{Ha}.
This gives the statement.
\end{proof}
As a direct consequence of the previous Proposition, one has the following.

\begin{corollary} Let  $X=X_{\Sigma}$ be a complete toric variety and $\mathcal L$ be a  
linear system on $X$.
Then the following are equivalent:
\begin{enumerate}
\item the general  element of $\mathcal L$ is quasismooth;
\item there exists a quasismooth element in $\mathcal L$.
\end{enumerate}
\end{corollary}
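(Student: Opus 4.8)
The plan is to deduce the Corollary directly from Proposition \ref{zariski}. The key point is that a linear system $\mathcal L$, viewed as a projective space $\pp(V_{\mathcal L})$, is irreducible, so any nonempty Zariski-open subset of $\mathcal L$ is dense and in particular is the complement of a proper closed subset; membership in such a set is exactly what we mean by ``the general element''.

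First I would set $U \subseteq \mathcal L$ to be the set of quasismooth elements, which is Zariski-open in $\mathcal L$ by Proposition \ref{zariski}. For the implication $(2)\Rightarrow(1)$: if there exists a quasismooth element, then $U$ is nonempty; since $\mathcal L \cong \pp(V_{\mathcal L})$ is an irreducible variety, a nonempty open subset is dense, and hence the general element of $\mathcal L$ lies in $U$, i.e.\ is quasismooth. For the implication $(1)\Rightarrow(2)$: if the general element of $\mathcal L$ is quasismooth then by definition $U$ contains a dense subset, so $U$ is nonempty, which gives a quasismooth element. (One could even note $(1)\Rightarrow(2)$ is trivial once $\mathcal L$ is nonempty, since ``general'' presupposes the defining open set is nonempty.)

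I do not expect any real obstacle here: the Corollary is a formal consequence of openness (Proposition \ref{zariski}) together with the irreducibility of $\mathcal L$, and the excerpt already flags it as ``a direct consequence''. The only mild point worth stating explicitly is why $\mathcal L$ is irreducible, namely that $\mathcal L = \pp(V_{\mathcal L})$ for a finite-dimensional vector space $V_{\mathcal L}$, hence is a projective space and thus irreducible; this is what turns ``nonempty open'' into ``dense open'', which is the content of ``general''.
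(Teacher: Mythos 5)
Your proposal is correct and is exactly the argument the paper intends: the corollary is stated as a direct consequence of Proposition \ref{zariski}, with the implicit use of the irreducibility of $\mathcal L\cong\pp(V_{\mathcal L})$ to pass from ``nonempty open'' to ``general''. Nothing further is needed.
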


In what follows we will say that a linear system is quasismooth if its general element is quasismooth.

\begin{proposition}
The general element of $\mathcal L$ is quasismooth outside of $p(B^*(\mathcal L))$.
In particular $\mathcal L$ is quasismooth if $B(\mathcal L)=\emptyset$.
\end{proposition}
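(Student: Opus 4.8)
The plan is to derive this from Bertini's theorem applied to the pull-back linear system $p^*\mathcal L$ on the smooth variety $\hat X$, together with the elementary observation that a fibre $p^{-1}(y)$ meets $B^*(\mathcal L)$ exactly when $y\in p(B^*(\mathcal L))$.

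First I would set things up over $\hat X$. Recall that $\hat X$ is smooth, being an open subset of $\bar X\cong\cc^{|\Sigma(1)|}$, and that by the discussion preceding Lemma~\ref{bs} the pull-back $p^*\mathcal L$ is a linear system on $\hat X$ whose members are precisely the hypersurfaces $p^*(Y)$, $Y\in\mathcal L$, and whose base locus is $B^*(\mathcal L)$. On the open set $U:=\hat X\setminus B^*(\mathcal L)$ the restricted system $p^*\mathcal L|_U$ is base-point free, and $U$ is smooth and quasi-projective; since we work in characteristic zero, Bertini's theorem, in its base-point-free form (see, e.g., \cite[III.10.9]{Ha}), gives that the general member of $p^*\mathcal L|_U$ is nonsingular. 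Equivalently, for a general $Y\in\mathcal L$ the hypersurface $p^*(Y)\subset\hat X$ is smooth at every point of $\hat X\setminus B^*(\mathcal L)$. One could instead invoke directly the form of Bertini's theorem asserting that the general member of a linear system on a variety is nonsingular away from the base locus of the system and the singular locus of the variety, the latter being empty here.

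Next I would translate back to $X$. Fix a general $Y$ as above and a point $y\in Y$ with $y\notin p(B^*(\mathcal L))$. By definition of the image, $p^{-1}(y)\cap B^*(\mathcal L)=\emptyset$, so $p^{-1}(y)\cap p^*(Y)$ is contained in $\hat X\setminus B^*(\mathcal L)$ and hence consists of smooth points of $p^*(Y)$; by definition this means that $Y$ is quasismooth at $y$, which proves the first assertion. For the ``in particular'', suppose $B(\mathcal L)=\emptyset$. Since $p(B^*(\mathcal L))\subseteq B(\mathcal L)$, we get $p(B^*(\mathcal L))=\emptyset$, so the first part shows that a general $Y\in\mathcal L$ is quasismooth at every point of $Y$. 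As $p(p^*(Y))\subseteq Y$ by construction, every point of $p^*(Y)$ lies over a point of $Y$, so this says precisely that $p^*(Y)$ is smooth, i.e.\ that $\mathcal L$ is quasismooth.

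I do not anticipate a serious obstacle. The two points deserving care are the use of the correct (base-point-free, characteristic zero) form of Bertini's theorem over the quasi-affine $\hat X$ --- legitimate because $\hat X$ is smooth and quasi-projective --- and the set-theoretic bookkeeping with the quotient map, namely that $y\notin p(B^*(\mathcal L))$ forces the whole fibre $p^{-1}(y)$ to avoid $B^*(\mathcal L)$. If one wishes to know moreover that the quasismooth locus of $\mathcal L$ is Zariski open, and not merely dense, this follows by rerunning the argument of Proposition~\ref{zariski} with $\hat X$ replaced by $\hat X\setminus B^*(\mathcal L)$.
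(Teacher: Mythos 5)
Your proposal is correct and follows essentially the same route as the paper: apply Bertini's theorem to the pull-back system $p^*\mathcal L$ on the smooth variety $\hat X$, conclude that the general member is singular only along $B^*(\mathcal L)$, and push the statement down via $p$, using $p(B^*(\mathcal L))\subseteq B(\mathcal L)$ for the last claim. The extra set-theoretic bookkeeping you supply is a harmless elaboration of the paper's one-line argument.
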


\begin{proof}
By the first Bertini's theorem applied to the linear system $p^*\mathcal L$ on $\hat X$,
we have that the singular locus of the general element of $p^*\mathcal L$ is contained 
in its base locus. Taking the image in $X$, this gives the statement.
\end{proof}

Finally, we observe that quasismoothness behaves well with respect to finite coverings which are the identity 
in Cox coordinates (see \cite[Lemma 1.1]{ACG}).

\begin{proposition}\label{fq}
Let $X$ be a toric variety associated to a fan $\Sigma\subset N_\qq$,
$\iota: N\to N'$ be a lattice monomorphism with finite cokernel and $\pi : X \to X'$ 
be the associated finite quotient. If the primitive generators  in $N$ 
of the rays of the fan of $X$ are primitive in $N'$, 
then the homomorphism $\pi^*: R(X') \to R(X)$ can be taken to be the identity.
In particular, if $\mathcal L$ is a monomial linear system on $X'$, then 
$\mathcal L$ is quasismooth if and only if $\pi^*\mathcal L$ is quasismooth.
\end{proposition}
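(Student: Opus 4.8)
The plan is to unwind the hypotheses so that the map $\pi^* : R(X') \to R(X)$ literally becomes the identity polynomial ring map, and then observe that quasismoothness is a condition about smoothness of a zero set in $\hat X$ versus $\hat{X'}$, which are the same affine open set. First I would set up the Cox ring comparison: since $\iota : N \to N'$ has finite cokernel, it is an isomorphism after tensoring with $\qq$, so the fans of $X$ and $X'$ have the same rays $\Sigma(1) = \Sigma'(1)$ as rays in $N_\qq = N'_\qq$ and the same cone structure, hence $X$ and $X'$ are related by the finite quotient $\pi : X \to X'$. The primitivity hypothesis says that the primitive ray generators $u_\rho \in N$ are \emph{also} the primitive ray generators in $N'$; this is exactly what guarantees that the two Cox constructions have the same set of variables $\{x_\rho : \rho \in \Sigma(1)\}$ and the same irrelevant ideal $J$ (which depends only on the combinatorics of $\Sigma$, not on the lattice). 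Thus $\bar X = \bar{X'} = \Spec\,\cc[x_\rho] \cong \cc^{|\Sigma(1)|}$ and $\hat X = \hat{X'}$ as the complement of the same $V(J)$, and $\pi^*$ can be chosen to be the identity on $R(X') = \cc[x_\rho : \rho \in \Sigma'(1)] = \cc[x_\rho : \rho \in \Sigma(1)] = R(X)$. (The only subtlety is that $\pi^*$ is a priori only well-defined up to the automorphisms of the Cox ring coming from the grading ambiguity; the primitivity condition is precisely what pins it down to the identity, and this is the point cited from \cite[Lemma 1.1]{ACG}.)

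Once $\pi^*$ is the identity, the equivalence is almost immediate. Let $\mathcal L$ be a monomial linear system on $X'$ with associated homogeneous subspace $V_{\mathcal L} \subset R(X')$. Under $\pi^* = \mathrm{id}$, the pull-back $\pi^*\mathcal L$ is the monomial linear system on $X$ with the \emph{same} subspace $V_{\mathcal L} \subset R(X) = R(X')$ (one must check that a monomial homogeneous for the $\Cl(X')$-grading remains homogeneous for the $\Cl(X)$-grading, which holds because the grading is refined, not changed on monomials: each monomial still has a well-defined class). Hence a general member $Y'$ of $\mathcal L$ and the corresponding general member $Y$ of $\pi^*\mathcal L$ have the \emph{same} defining polynomial $f \in R(X) = R(X')$, so the same singular locus $S(f) \subset \bar X = \bar{X'}$, and the same irrelevant locus $V(J)$. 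By the criterion recorded just after the definition of quasismoothness in Section \ref{sec-qs} ($Y$ is quasismooth iff $S(f_Y) \subseteq V(J)$), $Y$ is quasismooth in $X$ if and only if $Y'$ is quasismooth in $X'$. Combining with the Corollary to Proposition \ref{zariski} (which identifies "general member quasismooth" with "some member quasismooth"), we conclude that $\mathcal L$ is quasismooth if and only if $\pi^*\mathcal L$ is quasismooth.

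The main obstacle, and the only place where real care is needed, is the first step: verifying that $\pi^*$ can genuinely be taken to be the identity rather than merely an isomorphism of polynomial rings that permutes or rescales variables. This rests on a clean understanding of how the Cox ring and the map on Cox rings are functorial under the toric morphism $\pi$ induced by a lattice map sending rays to rays; the primitivity hypothesis removes the scaling ambiguity $x_\rho \mapsto x_{\rho}$ (after identifying $\Sigma(1)$ with $\Sigma'(1)$, a non-primitive image would force a substitution $x_\rho \mapsto x_\rho^{m}$-type correction on the pullback of coordinates, which is exactly what is excluded). I would either cite \cite[Lemma 1.1]{ACG} directly for this normalization, as the statement already does, or spell out the commutative diagram relating $\Cl(X') \to \Cl(X)$, $\qq$-divisors, and the exact sequences $0 \to M' \to \zz^{\Sigma(1)} \to \Cl(X') \to 0$ and $0 \to M \to \zz^{\Sigma(1)} \to \Cl(X) \to 0$ to see that the identity on $\zz^{\Sigma(1)}$ is compatible with $\pi$. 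Everything after that is a formal transport of the quasismoothness criterion along an equality of affine varieties $\hat X = \hat{X'}$.
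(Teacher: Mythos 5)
Your proposal is correct and follows the intended route: the paper itself gives no written proof of Proposition \ref{fq}, relying entirely on the citation to \cite[Lemma 1.1]{ACG} for the normalization of $\pi^*$, and your argument fills in exactly that content (same rays and cone combinatorics, hence same variables and same irrelevant ideal, with the primitivity hypothesis ruling out substitutions of the form $x_\rho\mapsto x_\rho^{c_\rho}$ so that $\pi^*$ is the identity on $R(X')=R(X)$). The deduction of the quasismoothness equivalence from the equality $\hat X=\hat X'$ and the identical defining polynomial, via the criterion $S(f_Y)\subseteq V(J)$, is the same one-line argument the paper intends.
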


\section{In terms of the Newton polytope}\label{sec-newt}
In this section we provide a first characterization of quasismoothness 
based on results in \cite{K}.
We will denote by $T^r\cong (\cc^*)^r$  the $r$-dimensional torus 
with coordinates $x=(x_1,\dots,x_r)$.
A Laurent polynomial on $T^r$ can be written as a finite sum 
$P=\sum_{m}a_mx^m$ with $m\in \zz^r$ and $a_m\in \cc$.
The {\em support} of $P$ is the set of $m\in \zz^r$ such that $a_m\not=0$ and
the convex hull of the support of $P$ in $\rr^r$ is the {\em Newton polytope} $\Delta(P)$ of $P$.

\begin{definition}
A collection of $\ell>0$ non-empty polytopes in $\rr^n$ is {\em degenerate} 
if it is possible to  
translate all the polytopes in an $(\ell-1)$-dimensional subspace.
A collection of polytopes in $\rr^n$ is called {\em dependent} if it contains a degenerate subcollection.
By convention, the empty collection of polytopes is independent.
\end{definition}

Given a set of Laurent polynomials $P_1,\dots,P_k\in \cc[x_1,x_1^{-1},\dots,x_r,x_r^{-1}]$,
we will say that they are {\em general with fixed support} if each $P_i$ is general in the family 
of polynomials having the same support as $P_i$.
The following comes from \cite[Theorem 1, \S 2.5]{K} and \cite[Lemmas 1,2, \S 2.2]{K}.

\begin{theorem}\label{khov}
Let $P_1 = \dots = P_k = 0$ be a system of equations
where $P_i\in \cc[x_1,x_1^{-1},\dots,x_r,x_r^{-1}]$ are general with fixed support.
If the Newton polyhedra $\Delta(P_1),\dots, \Delta(P_k)$ are dependent,
then the system is not compatible in $T^r$.
Otherwise it defines an analytic $(r - k)$-dimensional manifold in $T^r$.
\end{theorem}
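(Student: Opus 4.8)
The plan is to reduce both assertions to the square case $k=r$ of the Bernstein--Kushnirenko theorem, supplemented by a Bertini-type argument for the smoothness; the essential combinatorial input is then isolated at the end. I would treat the two cases separately.

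\emph{Dependent case.} Choose a minimal degenerate sub-collection; after renumbering, say it is $\Delta(P_1),\dots,\Delta(P_\ell)$. If $\ell=1$ then $\Delta(P_1)$ is a point, so $P_1$ is a nonzero monomial and already has no zero on $T^r$; assume $\ell\ge 2$. Minimality forces $\dim\big(\Delta(P_1)+\dots+\Delta(P_\ell)\big)=\ell-1$, so after a lattice translation the supports of $P_1,\dots,P_\ell$ span a rational subspace $L\subset\rr^r$ with $\rk(L\cap\zz^r)=\ell-1$. Picking a lattice point $m_i$ in the support of $P_i$, write $P_i=x^{m_i}Q_i$ with $Q_i$ supported on $L\cap\zz^r$; extending a basis of $L\cap\zz^r$ to a basis of $\zz^r$ gives a monomial automorphism of $T^r$ after which $Q_1,\dots,Q_\ell$ are Laurent polynomials in $x_1,\dots,x_{\ell-1}$ only. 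Since the $x^{m_i}$ are units on $T^r$, the system $P_1=\dots=P_\ell=0$ is equivalent to $Q_1=\dots=Q_\ell=0$. By minimality $\Delta(Q_1),\dots,\Delta(Q_{\ell-1})$ is independent in $\rr^{\ell-1}$, so by the independent square case $V(Q_1,\dots,Q_{\ell-1})\subset(\cc^*)^{\ell-1}$ is finite; a general $Q_\ell$ with its prescribed nonempty support --- a unit, or a hypersurface equation missing any given finite set --- has no zero on it. Hence the system is incompatible in $T^r$, and a fortiori so is $P_1=\dots=P_k=0$.

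\emph{Independent case.} Let $\mathcal P_i$ be the finite-dimensional vector space of Laurent polynomials with the same (nonempty) support as $P_i$ and form the incidence variety
\[
W=\{(x,(g_i))\in T^r\times\textstyle\prod_{i=1}^k\mathcal P_i\ :\ g_1(x)=\dots=g_k(x)=0\}.
\]
For fixed $x\in T^r$ the evaluation $g\mapsto g(x)$ is a nonzero functional on each $\mathcal P_i$ (no monomial vanishes on the torus), so the first projection realizes $W$ as a rank-$(\sum_i\dim\mathcal P_i-k)$ subbundle of the trivial bundle over $T^r$; thus $W$ is smooth and irreducible of dimension $r+\sum_i\dim\mathcal P_i-k$. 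Let $q\colon W\to\prod_i\mathcal P_i$ be the second projection. Once $q$ is known to be dominant --- for $k=r$ this is the existence part of Bernstein--Kushnirenko, and for $k<r$ it follows by adjoining $r-k$ further general equations (copies of a fixed full-dimensional polytope, say) so that the enlarged collection stays independent, applying the square case, and projecting --- one has $\dim\overline{q(W)}=\sum_i\dim\mathcal P_i$, and generic smoothness in characteristic zero gives a dense open $U\subset\prod_i\mathcal P_i$ over which $q$ is smooth with fibres smooth of pure dimension $\dim W-\sum_i\dim\mathcal P_i=r-k$. For $(g_i)\in U$ the fibre $q^{-1}((g_i))$ is, with its reduced structure, exactly $V(g_1,\dots,g_k)\subset T^r$; being smooth, it is a complex manifold of dimension $r-k$, as asserted.

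The step I expect to be the main obstacle is the dominance of $q$ --- that is, that a general system with independent Newton polytopes is compatible in the torus --- together with, in the square case, the fact that the count of torus solutions, the mixed volume $\mathrm{MV}(\Delta(P_1),\dots,\Delta(P_r))$, is strictly positive exactly under the independence hypothesis. This is the genuine combinatorial content, furnished by the Lemmas in \cite[\S 2.2]{K}, and it is what makes the whole statement coincide with \cite[Theorem~1, \S 2.5]{K}; the monomial reduction of the dependent case and the generic-smoothness step of the independent case are, by comparison, routine.
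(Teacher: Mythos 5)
Your proposal is correct, but be aware that the paper itself offers no proof of Theorem \ref{khov}: the statement is imported wholesale from Khovanskii, the dichotomy being \cite[Theorem 1, \S 2.5]{K} and the combinatorial input \cite[Lemmas 1, 2, \S 2.2]{K}. What you have done is reconstruct that result from the Bernstein--Kushnirenko theorem, Minkowski's positivity criterion (mixed volume positive exactly when the collection of polytopes is independent), and generic smoothness in characteristic zero; since these are precisely what Khovanskii's lemmas encode, your closing identification of the ``genuine combinatorial content'' matches the paper's citation, and your route is a legitimate re-derivation rather than a different theorem. Two remarks on the details. In the dependent case you do not actually need Bernstein: once the minimal degenerate subcollection is reduced, by the monomial change of coordinates you describe, to $\ell$ general equations on the $(\ell-1)$-dimensional torus, the same incidence-variety dimension count you use later already shows the projection to the coefficient space has image of dimension at most $\sum_i \dim\mathcal{P}_i-1$, hence is not dominant, so a general such system has no common torus zero; this avoids having to argue finiteness of $V(Q_1,\dots,Q_{\ell-1})$ and the slight awkwardness of choosing $Q_\ell$ generically relative to a set depending on the other $Q_i$. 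In the independent case your argument is complete provided you state explicitly that dominance of $q$ between irreducible varieties gives \emph{nonempty} fibers of dimension $r-k$ over a dense open subset of the target, which intersected with the generic-smoothness locus yields the desired nonempty smooth $(r-k)$-dimensional zero set (generic smoothness alone would be consistent with empty fibers); your auxiliary full-dimensional polytopes do keep the enlarged collection independent, so the reduction of dominance to the square case, and thence to positivity of the mixed volume, is sound.
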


We now consider hypersurfaces in toric varieties.
Let $\mathcal L$ be a monomial linear system on a toric variety $X=X_{\Sigma}$ with $r=|\Sigma(1)|$
and let $f\in R(X)$ be a defining element for a general hypersurface in the linear system.
The stratum $D_\sigma$, $\sigma\in \Sigma$, is contained in $B^*(\mathcal L)$ 
if and only if one can write $f=\sum_{\rho\in \sigma(1)} x_\rho f_\rho$. 
We now define a set of polytopes associated to the pair $(\mathcal L,\sigma)$.

\begin{definition} With the previous notation, we define $\Delta_\sigma^\rho(\mathcal L)$ 
to be the Newton polytope of the restriction of $f_{\rho}$ to $D_{\sigma}$.
\end{definition}

It is easy to see that the definition of $\Delta_\sigma^\rho(\mathcal L)$ does not depend 
on the choice of the writing for $f$, which is not unique in general (this also follows from the description  in 
Remark \ref{polytopes}).

\begin{remark}\label{ambient}
All polytopes $\Delta_\sigma^{\rho}(\mathcal L)$, $\rho\in \sigma(1)$, 
 are contained in the linear subspace 
 $Q^{-1}(w)\cap \{m_\rho=0: \rho\in \sigma(1)\}\subset \rr^r$ (see Section \ref{sec-pre}),
 whose dimension is equal to $\dim p(D_\sigma)$.  
Of course some of the polytopes can be empty.
\end{remark}

\begin{remark}\label{polytopes} After restricting to $D_{\sigma}$, the only left monomials in $f_{\rho}$ 
are those in the variables $x_{\tau}$ with $\tau\not\in \sigma(1)$.
Thus we have
\[
\Delta_\sigma^\rho(\mathcal L)+e_{\rho}={\rm conv}(m\in \Delta(f)\cap \zz^r: m_{\rho}=1 \text{ and } m_\gamma=0 \text{ for } \gamma\in \sigma(1)\backslash\{\rho\}).
\]
These polytopes, if not empty, are disjoint faces of the face of $\Delta(f)$ defined by $\sum_{\rho\in \sigma(1)}m_{\rho}=1$.
\end{remark}

\begin{theorem}\label{thm1}
Let $X=X_{\Sigma}$ be a toric variety  and $\mathcal L$ be a monomial linear system on $X$.  
The general element of $\mathcal L$ is quasismooth if and only if for any $\sigma\in \Sigma$ 
such that $D_\sigma\subseteq B^*(\mathcal L)$, 
the set of non-empty polyhedra $\Delta_\sigma^\rho(\mathcal L)$, $\rho\in \sigma(1)$, is dependent.
\end{theorem}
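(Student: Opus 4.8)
The plan is to reduce the quasismoothness condition to a statement about the compatibility of a system of Laurent polynomial equations on the torus strata $p(D_\sigma)$, so that Khovanskii's result (Theorem \ref{khov}) can be applied. First I would recall that $Y$ is quasismooth if and only if $S(f_Y)\subseteq V(J)$, i.e.\ the common zero set of $f_Y$ and all its partial derivatives $\partial f_Y/\partial x_\rho$ meets $\hat X=\bar X-V(J)$ in the empty set. By Proposition \ref{zariski} it suffices to check this for a general element $f$ of $\mathcal L$, and since $V(J)$ is a union of the strata $D_\sigma$ (more precisely $\hat X$ is covered by the torus-orbit-closures, and $S(f)\cap \hat X=\emptyset$ can be tested orbit by orbit), I would stratify $\hat X$ into its torus orbits. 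Each orbit is contained in exactly one stratum-complement, and a point of $\hat X$ lies in the orbit $O_\sigma\subset D_\sigma$ with $x_\rho=0$ precisely for $\rho\in\sigma(1)$ and $x_\tau\in\cc^*$ for $\tau\notin\sigma(1)$; the relevant fact from Section \ref{sec-pre} is that $O_\sigma\cap\hat X\neq\emptyset$, i.e.\ such $\sigma$ actually belongs to $\Sigma$. So $Y$ is quasismooth iff for every $\sigma\in\Sigma$ the system $f=\partial f/\partial x_\rho=0$, $\rho\in\Sigma(1)$, has no solution on $O_\sigma$.

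The next step is to analyze this system on a fixed orbit $O_\sigma\cong T^{r-|\sigma(1)|}$. Setting $x_\rho=0$ for $\rho\in\sigma(1)$ in $f$ and its derivatives, and writing $f=\sum_{\rho\in\sigma(1)}x_\rho f_\rho + g$, I observe two cases. If $D_\sigma\not\subseteq B^*(\mathcal L)$, then $f|_{O_\sigma}=g|_{O_\sigma}$ is a general (nonzero) Laurent polynomial on $O_\sigma$, so generically $f$ alone has no zero on $O_\sigma$ (its Newton polytope is a single point, or one can simply note $g$ is a general nonzero constant times a monomial after the substitution — in any case the equation $f=0$ is incompatible), hence nothing to check. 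If $D_\sigma\subseteq B^*(\mathcal L)$, then $f|_{O_\sigma}=0$ identically, so the first equation disappears, and for $\tau\notin\sigma(1)$ one computes $\partial f/\partial x_\tau|_{O_\sigma}=\sum_{\rho\in\sigma(1)}x_\rho\,\partial f_\rho/\partial x_\tau|_{O_\sigma}=0$ as well (each surviving term still has a factor $x_\rho=0$), while for $\rho\in\sigma(1)$ one gets $\partial f/\partial x_\rho|_{O_\sigma}=f_\rho|_{O_\sigma}$. Thus on $O_\sigma$ the system collapses exactly to
\[
f_\rho|_{D_\sigma}=0,\qquad \rho\in\sigma(1),
\]
a system of Laurent polynomials on the torus $O_\sigma\subset D_\sigma$ whose Newton polytopes are, by definition, the $\Delta_\sigma^\rho(\mathcal L)$ (after the harmless translation by $e_\rho$ recorded in Remark \ref{polytopes}, which does not affect dependence). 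Discarding the empty $f_\rho$'s (which contribute the equation $0=0$), we are left with the collection of non-empty polytopes $\Delta_\sigma^\rho(\mathcal L)$.

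Now I would invoke genericity: since $f$ is general in $\mathcal L$, the coefficients of the $f_\rho|_{D_\sigma}$ are general subject to fixed support, so the Laurent polynomials $f_\rho|_{D_\sigma}$ are general with fixed support in the sense preceding Theorem \ref{khov}. (One should check that the genericity of $f$ in $\mathcal L$ does induce independent genericity of the relevant coefficients after restriction — this is where a small argument is needed, using that the monomials of $f$ split according to which $x_\rho$-factor they carry, cf.\ Remark \ref{polytopes} where the $\Delta_\sigma^\rho$ are realized as \emph{disjoint} faces of a face of $\Delta(f)$, so their defining coefficients are genuinely independent.) By Theorem \ref{khov} applied on the torus $O_\sigma$, this system is incompatible precisely when the collection $\{\Delta_\sigma^\rho(\mathcal L):\rho\in\sigma(1),\ \Delta_\sigma^\rho(\mathcal L)\neq\emptyset\}$ is dependent, and otherwise it cuts out a nonempty manifold, so there is a zero on $O_\sigma$ and $Y$ fails to be quasismooth there. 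Combining over all $\sigma\in\Sigma$ with $D_\sigma\subseteq B^*(\mathcal L)$ yields the theorem.

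The main obstacle I expect is the genericity bookkeeping in the last paragraph: one must be sure that "general in $\mathcal L$" is strong enough that, after setting $x_\rho=0$ for $\rho\in\sigma(1)$, the resulting $f_\rho|_{D_\sigma}$ are \emph{simultaneously} general with their given supports and with independent coefficients, as required by Khovanskii's theorem. The disjointness of the faces $\Delta_\sigma^\rho(\mathcal L)+e_\rho$ in Remark \ref{polytopes} is the key combinatorial input that makes this work: distinct $f_\rho$'s involve disjoint sets of monomials of $f$, so varying $f$ varies their coefficients independently. A secondary, more routine point is the passage from "no zero of the derivative system on any torus orbit $O_\sigma\subset\hat X$" to "$S(f)\subseteq V(J)$", which follows from the orbit decomposition of $\hat X$ and the fact that $\hat X$ is exactly the union of the $O_\sigma$ for $\sigma\in\Sigma$.
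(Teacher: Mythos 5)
Your overall route is the same as the paper's: restrict the system $f=\partial f/\partial x_\rho=0$ to the torus $T_\sigma$ of a stratum, observe (as in Remark \ref{polytopes}) that when $D_\sigma\subseteq B^*(\mathcal L)$ it collapses to $f_\rho|_{D_\sigma}=0$ for $\rho\in\sigma(1)$, note that the disjointness of the supports makes these restrictions general with fixed support, and invoke Theorem \ref{khov}. This is exactly the paper's argument, phrased orbit by orbit instead of via ``take an irreducible component $S$ of $S(f)\cap\hat X$ and its minimal ambient stratum''; the genericity point you isolate (independent coefficients because the $\Delta_\sigma^\rho+e_\rho$ are disjoint faces) is the same observation the paper makes in one line.

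There is, however, one step in your write-up that is wrong as stated: in the case $D_\sigma\not\subseteq B^*(\mathcal L)$ you claim that $f|_{O_\sigma}$ is (generically) a single monomial, hence has no zero on the orbit at all. That is false in general: if some, but not all, basis monomials survive the restriction, $f|_{O_\sigma}$ is a general Laurent polynomial with several terms and certainly vanishes somewhere on the torus (e.g.\ already for $X=\pp^2$ and $\mathcal L$ the full system of conics). What is true, and all that is needed, is that the general member has no \emph{singular} point on such an orbit; this is precisely the Bertini-type statement proved in Section \ref{sec-qs} (the general element of $\mathcal L$ is quasismooth outside $p(B^*(\mathcal L))$), or equivalently the remark that a general member of a monomial linear system restricts on the torus to a base-point-free system whose general member is smooth there. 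The paper uses this implicitly when it asserts that the minimal stratum containing a component of $S(f)\cap\hat X$ lies in $B^*(\mathcal L)$, so the fix is a one-line citation rather than a new idea. A smaller caveat: your reduction assumes that every point of $\hat X$ has its set of vanishing coordinates equal to $\sigma(1)$ for some $\sigma\in\Sigma$; this is automatic for simplicial fans but not in general (for a non-simplicial cone a proper subset of its rays need not be a face), and you attribute it to Section \ref{sec-pre}, where it does not appear. Since the paper's own proof performs essentially the same stratification, this is not a point where you diverge from it, but it deserves an explicit justification or hypothesis rather than a citation to a fact that is not there.
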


\begin{proof} 
Let $f\in R(X)$ be a defining element for a general hypersurface in the linear system.
Assume $f$ to be not quasismooth, that is the singular locus $S(f)$ of $f$ intersects 
$\hat X$. Let $S$ be an irreducible component of $S(f)\cap \hat X$ and 
let $D_\sigma$ be the smallest toric stratum containing $S$, $\sigma\in\Sigma$.
Observe that $D_\sigma \subseteq B^*(\mathcal L)$, thus we can write $f=\sum_{\rho\in \sigma(1)}x_\rho f_\rho$.
By the minimality condition on $D_\sigma$,
$S$ contains a point in the torus $T_\sigma$ of coordinates $x_\tau, \tau \not\in \sigma(1)$.
Thus 
\[S(f)\cap D_\sigma=\cap_{\rho\in \sigma(1)} V(f_\rho)\cap D_\sigma
\] 
intersects the torus $T_\sigma$.
By Theorem \ref{khov} the set of Newton polyhedra of the restrictions of the 
$f_\rho$'s to $D_\sigma$ is independent. Observe that the polynomials $f_\rho$ 
are generic with fixed support by the generality assumption on $f$.

Conversely, if there is a stratum  $D_{\sigma}$, $\sigma\in\Sigma$, such that   
$D_\sigma\subseteq B^*(\mathcal L)$ and the set of polytopes
$\Delta_\sigma^\rho(\mathcal L)$ with $\rho\in \sigma(1)$ is  independent, then by Theorem \ref{khov} 
 the $f_\rho$'s have a common zero in $T_\sigma$.
 Thus $S(f)\cap T_\sigma$ is not empty. This implies that $S(f)\cap \hat X$ is non empty as well.
\end{proof}

\begin{corollary}\label{empty}
If there exists $\sigma\in\Sigma$ such that $D_\sigma\subseteq B^*(\mathcal L)$ 
and
$\Delta_\sigma^\rho(\mathcal L)=\emptyset$ for all $\rho\in\sigma(1)$, 
then the general element of $\mathcal L$ is not quasismooth.
\end{corollary}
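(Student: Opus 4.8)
The plan is to read this off directly from Theorem \ref{thm1}. Suppose $\sigma\in\Sigma$ is a cone with $D_\sigma\subseteq B^*(\mathcal L)$ and $\Delta_\sigma^\rho(\mathcal L)=\emptyset$ for every $\rho\in\sigma(1)$. Then the subcollection of non-empty polyhedra among the $\Delta_\sigma^\rho(\mathcal L)$, $\rho\in\sigma(1)$, is the empty collection, which is independent by the convention adopted in the definition of dependence. Hence the quasismoothness criterion of Theorem \ref{thm1} is violated at this $\sigma$, and the general element of $\mathcal L$ fails to be quasismooth.

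For completeness I would also record the underlying geometric reason, which yields an independent verification. Let $f=\sum_{\rho\in\sigma(1)}x_\rho f_\rho\in R(X)$ be a defining equation of a general member of $\mathcal L$. By hypothesis the restriction of each $f_\rho$ to $D_\sigma$ is the zero polynomial, so $f$ and all its partial derivatives vanish along $D_\sigma$: for $\tau\notin\sigma(1)$ one has $\partial f/\partial x_\tau=\sum_{\rho\in\sigma(1)}x_\rho\,\partial f_\rho/\partial x_\tau$, which vanishes on $D_\sigma$ since $x_\rho=0$ there, while for $\gamma\in\sigma(1)$ one has $\partial f/\partial x_\gamma\big|_{D_\sigma}=f_\gamma\big|_{D_\sigma}=0$. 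Thus $D_\sigma\subseteq S(f)$, and since $D_\sigma$ is by definition a non-empty subset of $\hat X$, we conclude $S(f)\cap\hat X\neq\emptyset$, i.e. $Y=V(f)$ is not quasismooth.

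The only point requiring a word of care is that the decomposition $f=\sum_{\rho\in\sigma(1)}x_\rho f_\rho$ is not unique; but, as already observed before Theorem \ref{thm1}, the polytopes $\Delta_\sigma^\rho(\mathcal L)$ — and in particular the property that all of them are empty — do not depend on this choice, so the argument is unambiguous. I do not expect any real obstacle: the statement is a formal consequence of Theorem \ref{thm1} together with the convention that the empty collection of polytopes is independent.
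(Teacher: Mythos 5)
Your first paragraph is exactly the paper's argument: the corollary is stated without a separate proof precisely because, when all $\Delta_\sigma^\rho(\mathcal L)$ are empty, the collection of non-empty polytopes is the empty (hence independent) collection, so Theorem \ref{thm1} immediately gives non-quasismoothness. Your supplementary direct computation is also correct and is a nice bonus: since $\Delta_\sigma^\rho(\mathcal L)=\emptyset$ forces $f_\rho|_{D_\sigma}=0$ for \emph{every} member of $\mathcal L$, it shows the stronger fact that each element of $\mathcal L$ is singular along the non-empty stratum $D_\sigma\subset\hat X$, without appealing to Khovanskii's theorem at all.
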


Given $D_\sigma\subseteq B^*(\mathcal L)$ we denote by
 $k_{\sigma}(\mathcal L)$ the number of $\rho\in \sigma(1)$ such that  ${f_\rho}_{|D_\sigma}$ is not zero,
 that is the number of non-empty polytopes $\Delta_\sigma^{\rho}(f)$, $\rho\in \sigma(1)$.

\begin{corollary}\label{cor-qs}
If $k_\sigma(\mathcal L)>\dim p(D_\sigma)$ for all $\sigma\in \Sigma$ such that $D_\sigma\subseteq B^*(\mathcal L)$, 
then the general element of $\mathcal L$ is quasismooth. 
 \end{corollary}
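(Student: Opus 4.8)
The plan is to deduce Corollary~\ref{cor-qs} directly from Theorem~\ref{thm1} together with Remark~\ref{ambient}. By Theorem~\ref{thm1}, the general element of $\mathcal L$ fails to be quasismooth if and only if there exists some $\sigma\in\Sigma$ with $D_\sigma\subseteq B^*(\mathcal L)$ for which the collection of non-empty polytopes $\{\Delta_\sigma^\rho(\mathcal L):\rho\in\sigma(1)\}$ is \emph{independent}. So it suffices to show that the hypothesis $k_\sigma(\mathcal L)>\dim p(D_\sigma)$ forces this collection to be \emph{dependent} for every such $\sigma$, which by Theorem~\ref{thm1} then yields quasismoothness.

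First I would fix a $\sigma\in\Sigma$ with $D_\sigma\subseteq B^*(\mathcal L)$ and recall from Remark~\ref{ambient} that all the polytopes $\Delta_\sigma^\rho(\mathcal L)$, $\rho\in\sigma(1)$, lie in one common linear subspace of $\rr^r$ of dimension $d:=\dim p(D_\sigma)$. Write $\ell:=k_\sigma(\mathcal L)$ for the number of non-empty polytopes among them, so by assumption $\ell>d$. The key observation is purely dimensional: any $\ell$ non-empty polytopes sitting inside a linear space of dimension $d$ can all be translated into a subspace of dimension $\min(d,\ell-1)$ — indeed, after translating one of them to contain the origin, the $\rr$-span of all of them has dimension at most $d$, and one can then translate the whole collection into any $(\ell-1)$-dimensional subspace containing that span whenever $d\le \ell-1$. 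Since $\ell>d$ gives $d\le\ell-1$, the collection of these $\ell$ non-empty polytopes is degenerate, hence the full collection (non-empty ones together with any empty ones, which do not affect the definition) contains a degenerate subcollection and is therefore dependent.

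Carrying this out for every $\sigma\in\Sigma$ with $D_\sigma\subseteq B^*(\mathcal L)$, the criterion of Theorem~\ref{thm1} is satisfied, so the general element of $\mathcal L$ is quasismooth, which is the assertion. I expect the main (and only real) obstacle to be stating the elementary linear-algebra fact cleanly: that a finite collection of non-empty polytopes contained in a $d$-dimensional affine subspace is degenerate as soon as its cardinality exceeds $d$. This is immediate once one unwinds the definition of degenerate — translate to bring a common point to the origin and use that the ambient span already has dimension $\le d\le \ell-1$ — but it is worth spelling out so the reader sees why $\dim p(D_\sigma)$, rather than $r$ or $\dim X$, is the relevant bound; here Remark~\ref{ambient} is exactly what supplies the needed $d$-dimensional ambient space.
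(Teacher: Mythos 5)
Your argument is correct and is essentially the paper's own proof: invoke Remark~\ref{ambient} to place the non-empty polytopes $\Delta_\sigma^\rho(\mathcal L)$ in a common subspace of dimension $\dim p(D_\sigma)$, observe that $k_\sigma(\mathcal L)$ polytopes in a space of strictly smaller dimension form a degenerate (hence dependent) collection, and conclude by Theorem~\ref{thm1}. The only difference is that you spell out the elementary degeneracy fact, which the paper states without proof.
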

\begin{proof}
This is an immediate consequence of Remark \ref{ambient} and the fact that 
any set containing at least $\ell+1$ polyhedra in a $\ell$-dimensional space is dependent.
\end{proof}

\begin{example}\label{blowup}
The following example shows that the converse of Corollary \ref{cor-qs} is false.
Let us consider the linear system $\mathcal L$ in $X=\mathbb P^3$ generated by the following monomials:
\[
x_1^3x_2,\ x_2^4,\ x_3^3x_4,\ x_4^4.
\]
Consider the blow up $\tilde X$ of $X$ at the points 
$(0,0,1,0)$ and $(1,0,0,0)$.
The Cox ring of  $\tilde X$ is $\cc[y_1,\dots,y_6]$, where the degrees of the variables 
are the columns of the matrix
\[
\left(
\begin{array}{cccccc}
1 & 1 & 1 & 1 & 0 & 0\\
0 & 0 & 1 & 0 & 1 & 0\\
1 & 0 &0 & 0 & 0 & 1 
\end{array}
\right)
\]
and the variables $y_5, y_6$ define the two exceptional divisors.
Moreover, a computation using the following Magma \cite{Magma} program 

\begin{verbatim}
P<[x]> := ProjectiveSpace(Rationals(),3);
B1,f1 := Blowup(P,&+[Rays(Fan(P))[i]: i in [1,2,4]]);
B<[y]>,f2 := Blowup(B1,&+[Rays(Fan(B1))[i]: i in [2,3,4]]);
f2*f1;
\end{verbatim}
gives that the blow up map in Cox coordinates is given by
\[
(y_1,y_2,y_3,y_4,y_5,y_6)\mapsto (y_1y_5, y_2y_5y_6,y_3y_6,y_4y_5y_6).
\]
Thus an easy computation shows the proper transform $\tilde{\mathcal L}$ of $\mathcal L$ in $\tilde X$ is generated by
\[
y_1^3y_2y_5^3,\ y_2^4y_5^3y_6^3,\ y_3^3y_4y_6^3,\ y_4^4y_5^3y_6^3.
\] 
The closure of the base locus of $p^*\tilde{\mathcal L}$ is given by
\[
\{y_2=y_4=0\}\cup\{y_2=y_6=0\}\cup\{y_4=y_5=0\}.
\]
An easy check of the criterion in Theorem \ref{thm1} shows that 
the general element of $\tilde{\mathcal L}$ is quasismooth (all polytopes $\Delta_{\sigma}^{\rho}(\mathcal{\tilde L})$ are either points or empty).
On the other hand, when $D_\sigma=\{y_2=y_6=0\}$, one has $k_\sigma(\tilde{\mathcal L})=1\leq3=\dim p(D_\sigma)$.
\end{example}

\begin{example}\label{not-max-comp} 
The following example shows that  the condition in Theorem \ref{thm1} 
has to be checked for all subsets $D_\sigma$ in the base locus, not only for 
the maximal ones.
Let $X=\pp^4$ and $\mathcal L$ be the linear system 
generated by $x_1^3,\ x_2^2x_1, x_3^2x_1,\ x_2x_3x_4$.
The base locus of $\mathcal L$ is 
\[
\{x_1=x_2=0\}\cup\{x_1=x_3=0\}\cup\{x_1=x_4=0\}.
\]
Observe that $\mathcal L$ is not quasismooth since 
the polytopes $\Delta_{\sigma}^{\rho}$ are all empty 
for $D_\sigma=\{x_1=x_2=x_3=0\}$.
On the other hand, the condition of Theorem \ref{thm1} 
holds for the three maximal components of the base locus.
\end{example}

The theorem easily implies that quasismoothness only depends on the Newton polytope of $\mathcal L$.
In particular it is enough to consider the linear system generated by the monomials associated to the vertices
of the Newton polytope.

\begin{corollary}\label{vert}
Let $X$ be a toric variety and let $\mathcal L, \mathcal L'$ 
be two monomial linear systems on $X$ such that $\Delta(\mathcal L)=\Delta(\mathcal L')$.
The general element of $\mathcal L$ is quasismooth if and only if the same holds for the general element of 
$\mathcal L'$.
\end{corollary}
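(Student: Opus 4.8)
The plan is to reduce the statement of Corollary~\ref{vert} entirely to Theorem~\ref{thm1}, exploiting that the combinatorial data appearing in that criterion depends only on the Newton polytope. First I would observe that the hypothesis $\Delta(\mathcal L)=\Delta(\mathcal L')$ forces the two linear systems to have the same degree $w\in\Cl(X)$, since the Newton polytope is contained in the affine slice $Q^{-1}(w)$; in particular the homomorphism $Q$ and the ambient subspace $Q^{-1}(w)\cap\{m_\rho=0:\rho\in\sigma(1)\}$ from Remark~\ref{ambient} are the same for both. Next I would invoke Lemma~\ref{bs}: since $B^*(\mathcal L)$ is the vanishing locus of the monomials attached to the vertices of $\Delta(\mathcal L)$, and $\Delta(\mathcal L)=\Delta(\mathcal L')$, we get $B^*(\mathcal L)=B^*(\mathcal L')$. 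Hence exactly the same set of strata $D_\sigma$ is relevant in the criterion for both systems, namely those $\sigma\in\Sigma$ with $D_\sigma\subseteq B^*(\mathcal L)=B^*(\mathcal L')$.

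The second step is to check that, for each such $\sigma$ and each $\rho\in\sigma(1)$, the polytope $\Delta_\sigma^\rho(\mathcal L)$ coincides with $\Delta_\sigma^\rho(\mathcal L')$. This is where I would lean on Remark~\ref{polytopes}, which gives the intrinsic description
\[
\Delta_\sigma^\rho(\mathcal L)+e_\rho=\operatorname{conv}\bigl(m\in\Delta(f)\cap\zz^r: m_\rho=1\text{ and }m_\gamma=0\text{ for }\gamma\in\sigma(1)\setminus\{\rho\}\bigr),
\]
for $f$ a general member of $\mathcal L$. The point is that the right-hand side depends on the lattice points of $\Delta(\mathcal L)$ only — not on which monomials one chose as generators, nor on the coefficients of $f$ — because for general $f$ the support of $f$ is the whole of $\Delta(\mathcal L)\cap\zz^r$ and restricting/setting coordinates to fixed values is an operation on $\Delta(\mathcal L)$ itself. (Alternatively, one can argue directly with the original description: the restriction ${f_\rho}_{|D_\sigma}$ collects the monomials $m$ of $f$ with $m_\rho=1$, $m_\gamma=0$ for the other $\gamma\in\sigma(1)$, divided by $x_\rho$, and for general $f$ all such monomials of $\Delta(\mathcal L)\cap\zz^r$ occur with nonzero coefficient.) Since $\Delta(\mathcal L)=\Delta(\mathcal L')$, the two families of polytopes $\{\Delta_\sigma^\rho(\mathcal L)\}_{\rho\in\sigma(1)}$ and $\{\Delta_\sigma^\rho(\mathcal L')\}_{\rho\in\sigma(1)}$ are identical, and in particular one is dependent if and only if the other is.

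Finally I would conclude: by Theorem~\ref{thm1}, the general element of $\mathcal L$ is quasismooth if and only if for every $\sigma$ with $D_\sigma\subseteq B^*(\mathcal L)$ the nonempty polytopes $\Delta_\sigma^\rho(\mathcal L)$ form a dependent collection; by the two previous steps this condition is verbatim the same as the corresponding condition for $\mathcal L'$; hence the general element of $\mathcal L'$ is quasismooth if and only if the general element of $\mathcal L$ is. For the last sentence of the corollary, I would take $\mathcal L'$ to be the linear system generated by the monomials $x^a$ with $a$ a vertex of $\Delta(\mathcal L)$; its Newton polytope is $\operatorname{conv}$ of those vertices, which is $\Delta(\mathcal L)$, so the equivalence just proved applies. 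I expect the only delicate point to be the justification that, for a general member $f$, the supports of the pieces $f_\rho$ realize the full lattice-point description of Remark~\ref{polytopes}, i.e. that genericity of $f$ within the monomial linear system is preserved under the operations of isolating $x_\rho$-linear, $\sigma$-restricted terms — but this is immediate since those operations only select a subset of the (generic, hence all nonzero) coefficients of $f$.
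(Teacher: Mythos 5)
Your argument is essentially the paper's own proof: Lemma \ref{bs} gives $B^*(\mathcal L)=B^*(\mathcal L')$, Remark \ref{polytopes} shows the polytopes $\Delta_\sigma^\rho(\mathcal L)$ and $\Delta_\sigma^\rho(\mathcal L')$ coincide, and Theorem \ref{thm1} then yields the equivalence; the observation about the vertex-generated subsystem is also the intended use of the corollary. One caveat about your supporting justification: the claim that for a general member $f$ the support is all of $\Delta(\mathcal L)\cap\zz^r$ is false in general, since the support of every member of $\mathcal L$ lies in the exponent set $S$ of the chosen monomial basis, which may well omit lattice points of $\Delta(\mathcal L)$ (for instance when $S$ consists only of the vertices). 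The invariance you want follows instead from the structure noted in Remark \ref{polytopes}: when nonempty, $\Delta_\sigma^\rho(\mathcal L)+e_\rho$ is the face of $\Delta(\mathcal L)$ cut out by $\sum_{\gamma\in\sigma(1)}m_\gamma=1$ and $m_\rho=1$ (every point of $\Delta(\mathcal L)$ satisfies $\sum_{\gamma\in\sigma(1)}m_\gamma\geq 1$ because $D_\sigma\subseteq B^*(\mathcal L)$), and the vertices of that face are vertices of $\Delta(\mathcal L)$, hence lie in the exponent set of \emph{any} monomial basis with the same Newton polytope; therefore the convex hull taken over $S$ equals the face itself and depends only on $\Delta(\mathcal L)$, which is all that is needed to run your comparison between $\mathcal L$ and $\mathcal L'$.
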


\begin{proof}
By Lemma \ref{bs}, $B^*(\mathcal L)=B^*(\mathcal L')$.
Moreover, given $\sigma\in \Sigma$, 
the polytopes $\Delta_\sigma^\rho(\mathcal L)$ and $\Delta_\sigma^\rho(\mathcal L')$ are the same by  Remark \ref{polytopes}. Thus we conclude by Theorem \ref{thm1}.
\end{proof}

\begin{example}\label{ex1}
Let $X=\pp^2\times\pp^1$ and $\mathcal L$ be the monomial linear system generated by 
\[ x_0^2x_1y_0^2,\ 
x_0^2x_1y_1^2,\ 
 x_0^2x_2y_0^2,\ 
 x_0^2x_2y_1^2,\ 
 x_1^3y_0^2,\ 
 x_1^3y_1^2,\ 
 x_2^3y_0^2,\ 
x_2^3y_1^2.\]
The only strata $D_\sigma$ such that $D_\sigma$ is  contained in $B^*(\mathcal L)$ 
are $D_{\sigma}=\{x_1=x_2=0\}$
and its two substrata 
\[
D_{\sigma_0}=\{x_1=x_2=y_0=0\},\quad  D_{\sigma_1}=\{x_1=x_2=y_1=0\}.
\]
Considering $\sigma$, one has that 
\[
\Delta_\sigma^{x_1}(\mathcal L)={\Delta_\sigma^{x_2}}(\mathcal L)={\rm conv}((2,0,0,2,0), (2,0,0,0,2)).
\]
Thus the two polytopes form a dependent set.
The same can be repeated for $D_{\sigma_0}$ and $D_{\sigma_1}$
so that the general element of $\mathcal L$ is quasismooth.
\end{example}

\begin{example}\label{p1}
Let $X=\pp^1\times\pp^1\times\pp^1$ with coordinates $(x_0,x_1,y_0,y_1,z_0,z_1)$. 
Let $\mathcal L$ be the linear system generated by the monomials
\[
x_1^2y_0^2z_0^2,\
x_1^2y_1^2z_0^2,\
x_1^2y_0^2z_1^2,\
x_0^2y_1^2z_1^2,\
x_1^2y_1^2z_1^2,\
x_0^2y_1^2z_0^2,\
x_0x_1y_0^2z_1^2,\
x_0x_1y_0^2z_0^2
.\] 
The base locus is given by $D_{\sigma}=\{x_1=y_1=0\}$ and 
the only non-empty polytope 
$\Delta_\sigma^\rho(\mathcal L),\rho\in\sigma(1)$ is a segment.
Thus Theorem \ref{thm1} proves that $\mathcal L$ is not quasismooth. 
\end{example}

\section{In terms of the exponents matrix}\label{sec-matrix}
Let $X = X_{\Sigma}$ be a projective toric variety, 
$\mathcal L$ be a monomial linear system on $X$  
of degree $w\in \Cl(X)$ and $\Delta(\mathcal L)$ be its Newton polytope.
We define the {\em matrix of exponents of } $\mathcal L$ to be the matrix
 $A$ whose rows are the vectors of exponents
of an ordered monomial basis of $V_{\mathcal L}$.   
 Moreover, we will denote by $A_{I,J}$ the submatrix of $A$ 
whose rows are indexed by elements of $I \subseteq \Delta(\mathcal L)\cap\zz^r$ 
and columns are indexed by elements of $J \subseteq \Sigma(1)$. 
For any such matrix we denote by $\rr A_{I,J}$ the linear span of the columns of $A_{I,J}$.
Given a non-empty subset $\gamma\subseteq \Sigma(1)$ we define 
\[
M_{\gamma}=\{m\in \Delta(\mathcal L)\cap\zz^r: \sum_{i\in \gamma} m_i=1\}. 
\]

\begin{theorem}\label{propqs}
Let $X = X_{\Sigma}$ be a projective toric variety with characteristic map $p:\hat X\to X$ and
$\mathcal L$ be a monomial linear system on $X$ with matrix of exponents $A$.
The following are equivalent
\begin{enumerate}
\item $\mathcal L$ is quasismooth;
\item for all $\sigma\in \Sigma$ such that $D_{\sigma}\subseteq B^*(\mathcal L)$ there exists a non-empty 
subset $\gamma \subseteq \sigma(1)$ 
such that $M_{\gamma}$ is not empty  and  
\begin{equation}\label{eq}
2\,\rk (A_{M_\gamma,\gamma})>\rk(A_{M_{\gamma},\Sigma(1)}).
\end{equation}
\end{enumerate}
\end{theorem}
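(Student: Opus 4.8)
The plan is to deduce Theorem~\ref{propqs} from Theorem~\ref{thm1} by translating the ``dependent set of polytopes'' condition into the rank condition \eqref{eq}. Fix $\sigma\in\Sigma$ with $D_\sigma\subseteq B^*(\mathcal L)$ and write $f=\sum_{\rho\in\sigma(1)}x_\rho f_\rho$ for a general defining element. By Theorem~\ref{thm1}, $\mathcal L$ is quasismooth if and only if, for every such $\sigma$, the collection of non-empty polytopes $\{\Delta_\sigma^\rho(\mathcal L):\rho\in\sigma(1)\}$ is dependent, i.e.\ contains a degenerate subcollection. By definition of degeneracy, this happens precisely when there is a non-empty subset $\gamma\subseteq\sigma(1)$ such that all $\Delta_\sigma^\rho(\mathcal L)$ with $\rho\in\gamma$ are non-empty and, after translation, they jointly span an affine space of dimension at most $|\gamma|-1$. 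So the first step is to record this reformulation cleanly: $\mathcal L$ is quasismooth iff for every relevant $\sigma$ there is a non-empty $\gamma\subseteq\sigma(1)$ with all $\Delta_\sigma^\rho(\mathcal L)$, $\rho\in\gamma$, non-empty and $\{\Delta_\sigma^\rho(\mathcal L):\rho\in\gamma\}$ degenerate.

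The second step is to identify $\gamma$-degeneracy with the rank inequality. Using Remark~\ref{polytopes}, for $\rho\in\gamma$ the translated polytope $\Delta_\sigma^\rho(\mathcal L)+e_\rho$ is the convex hull of those lattice points $m\in\Delta(\mathcal L)\cap\zz^r$ with $m_\rho=1$ and $m_\gamma=0$ for $\gamma'\in\sigma(1)\setminus\{\rho\}$ — in particular with $\sum_{i\in\gamma}m_i=1$, so these lattice points all lie in $M_\gamma$; conversely every $m\in M_\gamma$ (supported appropriately) lies in exactly one such piece since $\sum_{i\in\gamma}m_i=1$ with non-negative integer entries forces exactly one coordinate in $\gamma$ to equal $1$. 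Hence the rows of $A_{M_\gamma,\Sigma(1)}$ are precisely the exponent vectors of the monomials partitioned among the pieces $\Delta_\sigma^\rho(\mathcal L)+e_\rho$, $\rho\in\gamma$ (and each piece non-empty is equivalent to $M_\gamma$ meeting all $|\gamma|$ blocks, which one should check is automatic or can be folded into the statement via ``$M_\gamma$ not empty'' together with genericity — this is a point to handle carefully). Now the affine span of a single union of translated polytopes: a standard fact is that a collection of $\ell$ non-empty polytopes $P_1,\dots,P_\ell$ is degenerate (translatable into a common $(\ell-1)$-dimensional subspace) if and only if $\dim\operatorname{Conv}(P_1\cup\cdots\cup P_\ell)\le \ell-1$ — wait, more precisely one must compare the dimension of the union's convex hull with the sum of individual dimensions plus $\ell-1$; the correct criterion is that the collection is degenerate iff $\dim\operatorname{Conv}(\bigcup_i(P_i - v_i))\le \ell-1$ for suitable translation vectors, equivalently iff the affine hull of $\bigcup_i P_i$ has dimension at most $(\ell-1)+\sum_i\dim P_i$ fails to be an equality in the independent case. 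I would instead argue directly: degeneracy of $\{\Delta_\sigma^\rho:\rho\in\gamma\}$ is equivalent to $\dim\operatorname{aff}\bigl(\bigcup_{\rho\in\gamma}(\Delta_\sigma^\rho+e_\rho)\bigr)\le |\gamma|-1 + (\text{common translate dimension})$, which one converts, using that the span of the rows of $A_{M_\gamma,\Sigma(1)}$ over $\rr$ (as an affine span, then passing to the linear span by homogeneity in degree $w$) has dimension $\rk(A_{M_\gamma,\Sigma(1)})-1$, into a statement comparing $\rk(A_{M_\gamma,\Sigma(1)})$ with $\rk(A_{M_\gamma,\gamma})$.

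The cleanest way to get the factor $2$ is the following linear-algebra observation, which is the technical heart. Split the column set $\Sigma(1)$ as $\gamma\sqcup(\Sigma(1)\setminus\gamma)$, so $A_{M_\gamma,\Sigma(1)} = [\,A_{M_\gamma,\gamma}\ \mid\ A_{M_\gamma,\Sigma(1)\setminus\gamma}\,]$. Because every row $m\in M_\gamma$ has $\sum_{i\in\gamma}m_i=1$ and, within the block $\gamma$, is a standard basis vector $e_\rho$ (as the monomial lies in $\Delta_\sigma^\rho+e_\rho$), the $\gamma$-block of $A_{M_\gamma,\gamma}$ consists of $0/1$ rows each with a single $1$, so $\rk(A_{M_\gamma,\gamma})$ equals the number of distinct $\rho\in\gamma$ that actually occur, i.e.\ the number $k_\gamma$ of non-empty pieces. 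On the other hand $\rk(A_{M_\gamma,\Sigma(1)})$ decomposes as $k_\gamma$ (from the $\gamma$-block, since different blocks contribute independent $\gamma$-columns) plus the dimension of the span of the ``tails'', which is the dimension of the union $\bigcup_\rho\Delta_\sigma^\rho$ as an affine configuration sitting inside the $m_\gamma=0$ coordinate hyperplane — and this union being degenerate means exactly that this tail-dimension is $\le k_\gamma-1$. Thus $\rk(A_{M_\gamma,\Sigma(1)})\le k_\gamma + (k_\gamma-1) = 2k_\gamma-1 < 2k_\gamma = 2\,\rk(A_{M_\gamma,\gamma})$, and conversely strict inequality \eqref{eq} forces the tail-dimension $< k_\gamma$, i.e.\ degeneracy. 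I would write this decomposition carefully (taking $\gamma$ minimal so that all pieces are non-empty is convenient, and one should note that shrinking $\gamma$ to the set of indices with non-empty $\Delta_\sigma^\rho$ does not change the existence of a valid $\gamma$ in statement (2)). The main obstacle I anticipate is precisely the bookkeeping in this rank decomposition: making rigorous that the $\gamma$-block contributes exactly $k_\gamma$ to the total rank independently of the tails (which uses the disjointness of the pieces from Remark~\ref{polytopes}, so that the supports in the $\gamma$-columns are disjoint), and correctly matching ``dependent collection of polytopes'' in the sense of the Definition with the affine-dimension count — the $\pm1$ shifts between affine hulls, linear spans, and the convention about the empty collection need to be tracked with care. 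Once that is pinned down, combining it with Theorem~\ref{thm1} gives the equivalence.
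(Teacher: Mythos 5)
Your proposal is correct and follows essentially the same route as the paper: reduce to Theorem \ref{thm1}, observe that the rows of $A_{M_\gamma,\gamma}$ are $0/1$ indicator vectors so that $\rk(A_{M_\gamma,\gamma})$ is the number $k$ of non-empty polytopes, and show that $\rk(A_{M_\gamma,\Sigma(1)})=k+d$ with $d$ the dimension of the span of the within-piece differences, so that degeneracy $k>d$ is exactly \eqref{eq} --- the paper performs this last rank computation via a cokernel diagram with the difference matrix $B$, while you use a direct column-block (``tails'') decomposition, which is the same calculation. The bookkeeping point you flag, namely identifying the rows indexed by $M_\gamma$ with the lattice points of the translated polytopes $\Delta_\sigma^\rho(\mathcal L)+e_\rho$ (whose definition also requires vanishing of the coordinates indexed by $\sigma(1)\setminus\gamma$), is treated at the same level of detail in the paper's own proof, so your sketch matches it in both structure and rigor.
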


\begin{proof}
Let $\sigma\in \Sigma$ and $\gamma\subseteq \sigma(1)$ 
as in the statement, $k:=|\gamma|$ and $s:=|M_\gamma|$.
We can assume that $\Delta_{\sigma}^{\rho}(\mathcal L)$ is not empty for all $\rho\in \gamma$ 
since the empty polytopes give zero columns in both $A_{M_\gamma,\gamma}$ 
and $A_{M_{\gamma},\Sigma(1)}$.
We will now prove that the collection of polytopes $\{\Delta_{\sigma}^{\rho}(\mathcal L)\}_{\rho\in \gamma}$ 
is degenerate if and only if  $2\,\rk (A_{M_\gamma,\gamma})>\rk(A_{M_{\gamma},\Sigma(1)})$.
This implies the thesis by Theorem \ref{thm1}.

The minimal dimension $d$ of a linear space containing translates of  the polytopes $\Delta_{\sigma}^{\rho}(\mathcal L)$,  $\rho\in \gamma$,   
is the dimension of the linear span of any set of translates of such polytopes that all contain the origin. In particular if we pick a
 lattice point $m(\rho) \in \Delta_{\sigma}^{\rho}(\mathcal L)\cap \zz^r$ for each $\rho\in \gamma$, we have 
\[
d=\dim\,{\rm Span}(\Delta_{\sigma}^{\rho}(\mathcal L)-m(\rho): \rho\in \gamma).
\]
The columns of $A_{M_\gamma ,\gamma}$ are of the form $A_\rho = (A_{m,\rho })_{m\in M_\gamma}$ with $A_{m,\rho} = 1$ if $m\in \Delta_{\sigma}^{\rho}(\mathcal L)$ and 0 otherwise.
In particular the columns of $A_{M_\gamma ,\gamma}$ are linearly independent 
since for $\rho\not= \rho'$ the polytopes $\Delta_{\sigma}^{\rho}(\mathcal L)$ and $\Delta_{\sigma}^{\rho'}(\mathcal L)$  
have no common lattice point. It follows
that $\rk(A_{M_\gamma,\gamma}) = k$ and that the linear map 
\[
\phi_{\gamma} : \rr^k \to \rr A_{M_\gamma,\Sigma(1)}\subseteq \rr^s,\quad x\mapsto A_{M_\gamma,\gamma}\cdot x
\] 
is injective.
We now show that $d=\dim({\rm coker}\,\phi_\gamma)$.
This gives the thesis since the degeneracy condition $k > d$ is thus equivalent to 
$k>\rk(A_{M_\gamma,\Sigma(1)})-k$.
We consider the following commutative diagram with exact rows
\[
\xymatrix{
0\ar[r] & \rr^k\ar@{=}[d]\ar[r]^{\tilde \phi_\gamma} & \rr^ {s}\ar[r]^{\tilde \alpha} & \tilde C\ar[r] & 0\\
0\ar[r] & \rr^k\ar[r]^-{\phi_\gamma} & \rr A_{M_\gamma,\Sigma(1)}\ar[r]^{\alpha}\ar[u]^{i} &  C\ar[u]\ar[r] & 0,\\
} 
\]
where the vertical arrows are inclusions.
In order to compute $\dim(C)$ we compute the rank of the map $\tilde \alpha \circ i$. 
 The map $\tilde\alpha$ is given by the  $(s-k)\times s$ 
matrix $B$  which is the vertical join of the matrices $B_\rho$, with $\rho\in \gamma$, where the rows of $B_\rho$ 
are indexed by the elements $m\in \Delta_{\sigma}^{\rho}(\mathcal L)\cap\zz^n-\{m(\rho)\}$ 
and have $1$ in the position $m(\rho)$, $-1$ in the position $m$ and $0$ in all other positions.

The rows of the matrix $B\cdot A_{M_\gamma,\Sigma(1)}$ are the vectors of exponents of the monomials $x^{m(\rho)}/x^{m}$ for $\rho\in\gamma$, $m\in\Delta_{\sigma}^{\rho}(\mathcal L)\cap\zz^n-\{m(\rho)\}$.
It follows that the rank of $\tilde \alpha\circ i$ is equal to the dimension of 
the linear span of the differences $m(\rho)-m$, and hence is $d$, which proves the theorem.
\end{proof}

\begin{remark} In Theorem \ref{propqs} one can replace $M_{\gamma}$ 
with the set $V$ of vertices of all the polytopes $\Delta_{\sigma}^{\rho}(\mathcal L)$, $\rho\in \gamma$.
In fact $\rk (A_{V,\gamma})=\rk (A_{M_\gamma,\gamma})=k$ and 
$\rk(A_{V,\Sigma(1)})=\rk(A_{M_{\gamma},\Sigma(1)})$ since any lattice point 
in $M_\gamma$ is in the linear span of  $V$.

\end{remark}

\begin{example}
Let $\mathcal L$ be as in Example \ref{ex1}. The matrix $A$ is of the form
\[A=\left(\begin{array}{ccccc}
2&1&0&2&0\\
2&1&0&0&2\\
2&0&1&2&0\\
2&0&1&0&2\\
0&3&0&2&0\\
0&3&0&0&2\\
0&0&3&2&0\\
0&0&3&0&2
\end{array}
\right).
\]
When considering $D_\sigma=\{x_1=x_2=0\}\subset B^*(\mathcal L)$ one can 
choose $\gamma=\{\rho_1,\rho_2\}$, where $\rho_i$ is the ray 
corresponding to the variable $x_i$, since we already observed that $\Delta_\sigma^{\rho_j}(\mathcal L), j=1,2$ are not empty. 
With this choice, $$M_\gamma= \{(2,1,0,2,0),(2,1,0,0,2),(2,0,1,2,0),(2,0,1,0,2)\}$$and 
the matrices appearing in the proof of Theorem \ref{propqs} are 
\[A_{V,\gamma}=\left(\begin{array}{cc}
1&0\\
1&0\\
0&1\\
0&1\\
\end{array}
\right),\
A_{V,\Sigma(1)}=\left(\begin{array}{ccccc}
2&1&0&2&0\\
2&1&0&0&2\\
2&0&1&2&0\\
2&0&1&0&2
\end{array}
\right),\
B=
\left(\begin{array}{cccc}
1&-1&0&0\\
0&0&1&-1
\end{array}
\right).
\]
We have $\rk(A_{M_\gamma,\gamma})= \rk(A_{V,\gamma})=2$ and $\rk(
A_{M_\gamma,\Sigma(1)})=\rk(
A_{V,\Sigma(1)})=3$, so that the condition is satisfied for this cone $\sigma$.
The same can be repeated for $D_{\sigma_0}=\{x_1=x_2=y_0=0\}$ and $D_{\sigma_1}=\{x_1=x_2=y_1=0\}$, proving thus quasismoothness of $\mathcal L$.
\end{example}

\section{The case of fake weighted projective spaces}\label{sec-wps}
In this section we  
show how our results allow to recover the 
known classification of quasismooth hypersurfaces in fake weighted projective 
spaces (see \cite{Kou1, Kou2, kreuzerskarke, F} and \cite[Remark 2.3]{HK} for further references).
A fake weighted projective space is a complete simplicial toric variety $X$ with Picard number one (see also \cite{Bu} and  \cite[Lemma 2.11]{BC}).
 In particular $X$ is $\mathbb Q$-factorial and $\bar X\backslash \hat X=\{0\}$.

We start giving an easy necessary condition for quasismoothness.
 We denote by $r_\sigma$ the dimension of  $(\bar X\backslash \hat X)\cap D_\sigma$.

\begin{proposition}\label{thm-qs}
Let $X=X_{\Sigma}$ be a projective toric variety 
and $\mathcal L$ be a quasismooth monomial linear system on $X$ 
whose monomial basis does not contain any generator of $R(X)$.
Then  for any  $D_\sigma\subseteq B^*(\mathcal L)$ one has
\begin{equation}
\dim(D_\sigma)-k_\sigma(\mathcal L)\leq r_\sigma.
\label{eq-qs}
\end{equation}
 In particular $2\dim(D_\sigma)\leq r_\sigma+|\Sigma(1)|$.  
 \end{proposition}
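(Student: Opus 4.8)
The plan is to derive \eqref{eq-qs} directly from a dimension count on the singular locus of a generic defining polynomial, without passing through the Khovanskii criterion. Fix $\sigma\in\Sigma$ with $D_\sigma\subseteq B^*(\mathcal L)$ and let $L\subseteq\bar X$ be the coordinate subspace $\{x_\rho=0:\rho\in\sigma(1)\}$, that is the closure of $D_\sigma$ in $\bar X$. Then $L\cong\cc^{N}$ with $N=|\Sigma(1)|-|\sigma(1)|=\dim D_\sigma$, one has $D_\sigma=L\setminus V(J)$, and $r_\sigma=\dim(L\cap V(J))$. Pick $f\in V_{\mathcal L}$ general; since $X$ is complete and $\mathcal L$ is quasismooth, $f$ is quasismooth, so $S(f)\cap\hat X=\emptyset$. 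Consequently $S(f)\cap L$ is contained in $L\cap V(J)$, and therefore $\dim\bigl(S(f)\cap L\bigr)\le r_\sigma$.

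Next I would describe $S(f)\cap L$ explicitly. As $D_\sigma\subseteq B^*(\mathcal L)$, every monomial in a basis of $V_{\mathcal L}$ is divisible by some $x_\rho$ with $\rho\in\sigma(1)$, so we may write $f=\sum_{\rho\in\sigma(1)}x_\rho f_\rho$. Restricting the partial derivatives of $f$ to $L$, one checks that $\partial f/\partial x_\tau$ vanishes identically on $L$ for $\tau\notin\sigma(1)$ (a surviving monomial of it would be divisible by no $x_\rho$ with $\rho\in\sigma(1)$, contradicting $D_\sigma\subseteq B^*(\mathcal L)$), while for $\rho\in\sigma(1)$ the restriction $g_\rho:=(\partial f/\partial x_\rho)|_{L}$ is a polynomial in the variables $x_\tau$, $\tau\notin\sigma(1)$, whose Newton polytope is $\Delta_\sigma^\rho(\mathcal L)$ (compare Remark \ref{polytopes}). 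Hence $S(f)\cap L=V(g_\rho:\rho\in\sigma(1))\subseteq\cc^{N}$, and by the genericity of $f$ exactly $k_\sigma:=k_\sigma(\mathcal L)$ of the polynomials $g_\rho$ are nonzero.

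The key point is that $S(f)\cap L$ is nonempty, and this is where the hypothesis enters: since no variable $x_\rho$ occurs in the monomial basis of $V_{\mathcal L}$, the polynomial $f$ has no linear part, so all its partial derivatives vanish at the origin; thus $0\in S(f)$, and as $0\in L$ we get $0\in S(f)\cap L$. Being a nonempty subvariety of $\cc^{N}$ cut out by the $k_\sigma$ equations $g_\rho=0$, the set $S(f)\cap L$ has every irreducible component of dimension at least $N-k_\sigma$. Combining this with the bound of the first paragraph yields $\dim D_\sigma-k_\sigma=N-k_\sigma\le\dim\bigl(S(f)\cap L\bigr)\le r_\sigma$, which is \eqref{eq-qs}. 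The last assertion is then immediate: the $k_\sigma$ nonempty polytopes $\Delta_\sigma^\rho(\mathcal L)$ are indexed by a subset of $\sigma(1)$, so $k_\sigma\le|\sigma(1)|=|\Sigma(1)|-\dim D_\sigma$, and substituting into \eqref{eq-qs} gives $2\dim D_\sigma\le r_\sigma+|\Sigma(1)|$.

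The only genuinely delicate step is the nonemptiness of $S(f)\cap L$: once the origin is known to lie in it, the lower bound $\dim\ge N-k_\sigma$ is the standard estimate for the dimension of a variety defined by $k_\sigma$ equations, and everything else — the decomposition $f=\sum x_\rho f_\rho$, the vanishing of the transverse partials on $L$, and the final arithmetic — is routine bookkeeping using the structure of $B^*(\mathcal L)$.
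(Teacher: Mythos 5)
Your proof is correct and takes essentially the same route as the paper's: write $f=\sum_{\rho\in\sigma(1)}x_\rho f_\rho$, identify $S(f)\cap\overline{D_\sigma}$ with the common zero locus of the $k_\sigma(\mathcal L)$ nonzero restrictions $f_\rho|_{D_\sigma}$, and play the resulting lower bound $\dim D_\sigma-k_\sigma(\mathcal L)$ for the dimension of that locus against the upper bound $r_\sigma$ forced by quasismoothness. The only (welcome) refinement is that you make the nonemptiness of $S(f)\cap L$ explicit via the origin, using that no basis monomial is a generator, a point the paper covers only with the brief remark that the $f_\rho$ are not constant.
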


\begin{proof}
Let $R(X)=\cc[x_1,\ldots,x_r]$ and $D_\sigma$ be a subset of $B^*(\mathcal L)$ 
such that $\dim(D_\sigma)-k_\sigma(\mathcal L)>r_\sigma$.
Thus the general $f$ in $\mathcal L$ can be written as $f=\sum_{\rho\in \sigma(1)}x_\rho f_\rho$ and
$S(f)\cap D_\sigma$ is equal to $D_\sigma\cap V(f_\rho: \rho \in \sigma(1))$.
Observe that all $f_{\rho}$ are not constant by the hypothesis on $\mathcal L$.
Let $S$ be the closure of an irreducible component of 
$S(f)\cap D_\sigma$ and
$V=\bar X\backslash \hat X\cap S$.
We have that
\[
\dim(S)\geq \dim(D_\sigma)-k_\sigma(\mathcal L)>r_\sigma\geq \dim(V).
\]
Thus $V$ is properly contained in $S$,
so that $\mathcal L$ is not quasismooth.
The last statement follows from \eqref{eq-qs} since $k_\sigma(\mathcal L)\leq |\Sigma(1)|-\dim(D_\sigma)$.
\end{proof}

\begin{remark} Let $\mathcal L$ be a monomial linear system 
on a toric variety $X$ whose monomial basis contains a generator $x_0$ of $R(X)$.
 In this case, a general element of $\mathcal L$ 
is defined by an equation of the form $f=\alpha x_0+g$, where $\alpha$ is a constant and $g$ is a polynomial not containing $x_0$. 
Either by a direct computation or applying Theorem \ref{thm1} 
one easily shows that $\mathcal L$ is quasismooth. 
\end{remark}

\begin{corollary}\label{wps}
Let $X=X_{\Sigma}$ be an $n$-dimensional fake weighted projective space and 
$\mathcal L$ be a monomial linear system on $X$ whose monomial basis does not contain any generator of $R(X)$.
 Then  $\mathcal L$ is quasismooth if and only if  
  $\dim(D_\sigma)-k_\sigma(\mathcal L)\leq 0$ for any $\sigma\in \Sigma$ such that $D_\sigma\subseteq B^*(\mathcal L)$.
 \end{corollary}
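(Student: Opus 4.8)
The plan is to deduce Corollary~\ref{wps} from Proposition~\ref{thm-qs} together with the special geometry of a fake weighted projective space. Recall that for such an $X$ the only point of $\bar X\setminus\hat X$ is the origin $0\in\bar X$; hence $(\bar X\setminus\hat X)\cap D_\sigma=\{0\}$ whenever $D_\sigma\neq\emptyset$ (which is always the case since each $D_\sigma$ contains $0$), so $r_\sigma=0$ for every $\sigma\in\Sigma$. With this observation the inequality \eqref{eq-qs} of Proposition~\ref{thm-qs} becomes exactly $\dim(D_\sigma)-k_\sigma(\mathcal L)\leq 0$, which gives the necessity of the stated condition immediately.

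For the converse I would argue as follows. Suppose $\dim(D_\sigma)-k_\sigma(\mathcal L)\leq 0$ for all $\sigma\in\Sigma$ with $D_\sigma\subseteq B^*(\mathcal L)$; I must show $\mathcal L$ is quasismooth. Since $X$ is simplicial, every cone $\sigma$ is generated by $|\sigma(1)|=\dim(\sigma)$ rays, and $\dim p(D_\sigma)=n-|\sigma(1)|$; more to the point $\dim(D_\sigma)=r-|\sigma(1)|$ where $r=|\Sigma(1)|$. I plan to run the criterion of Theorem~\ref{thm1}: fix $\sigma$ with $D_\sigma\subseteq B^*(\mathcal L)$ and show the non-empty polytopes $\Delta_\sigma^\rho(\mathcal L)$, $\rho\in\sigma(1)$, form a dependent collection. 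By Remark~\ref{ambient} all these polytopes lie in a linear subspace of dimension $\dim p(D_\sigma)=n-|\sigma(1)|$. The hypothesis $\dim(D_\sigma)-k_\sigma(\mathcal L)\leq 0$ reads $r-|\sigma(1)|\leq k_\sigma(\mathcal L)$; since for a fake weighted projective space $r=n+1$, this says $k_\sigma(\mathcal L)\geq n+1-|\sigma(1)|=\dim p(D_\sigma)+1$. Thus the number of non-empty polytopes strictly exceeds the dimension of the ambient subspace containing them, so the collection is dependent — in fact this is precisely the sufficient condition already isolated in Corollary~\ref{cor-qs}, applied with the ambient-dimension bound of Remark~\ref{ambient}. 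Hence by Theorem~\ref{thm1} the general element of $\mathcal L$ is quasismooth.

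The one point requiring a little care — and the natural place for the argument to go wrong if stated carelessly — is the numerology $r=n+1$ and $\dim(D_\sigma)=r-|\sigma(1)|$, $\dim p(D_\sigma)=n-|\sigma(1)|$, which is what makes the two inequalities $\dim(D_\sigma)-k_\sigma(\mathcal L)\leq 0$ and $k_\sigma(\mathcal L)>\dim p(D_\sigma)$ coincide. This uses that $X$ has Picard number one, so $\Cl(X)$ has rank one and $r=\dim X+\operatorname{rk}\Cl(X)=n+1$, and that $\Sigma$ is complete and simplicial so that $D_\sigma$ really does have the expected codimension $|\sigma(1)|$ in $\bar X$ and $p(D_\sigma)$ codimension $|\sigma(1)|$ in $X$. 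Once these identities are in place, necessity is Proposition~\ref{thm-qs} with $r_\sigma=0$, and sufficiency is Corollary~\ref{cor-qs} (equivalently a direct application of Theorem~\ref{thm1} via Remark~\ref{ambient}), so the corollary follows with no further computation.
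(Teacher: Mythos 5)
Your proposal is correct and follows essentially the same route as the paper: necessity via Proposition \ref{thm-qs} with $r_\sigma=0$ (since $\bar X\setminus\hat X=\{0\}$ for a fake weighted projective space), and sufficiency via Corollary \ref{cor-qs}, using that $\dim(D_\sigma)=\dim p(D_\sigma)+1$. The paper phrases the latter identity as $\dim p(D_\sigma)=\dim(D_\sigma)-\operatorname{rk}\Cl(X)$ by $\qq$-factoriality, while you derive it from $r=n+1$; these are the same numerology.
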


\begin{proof}
This follows from  Corollary \ref{cor-qs} and Proposition \ref{thm-qs} 
since $r_\sigma=0$ and, since $X$ is $\qq$-factorial, 
then $\dim p(D_\sigma)=\dim(D_\sigma)-\rk\, {\rm Cl}(X)=\dim(D_\sigma)-1$.
\end{proof}

A different formulation is the following (see also \cite[Theorem 8.1]{F}).

\begin{corollary}\label{cor:wps}
Let $X=X_{\Sigma}$ be an $n$-dimensional fake weighted projective space and 
$\mathcal L$ be a monomial linear system on $X$ with matrix of exponents $A$ 
whose monomial basis does not contain any generator of $R(X)$.
Then $\mathcal L$ is quasismooth if and only if 
for any subset $\gamma\subseteq \{0,\dots,n\}$ either $A$ has a row whose entries indexed by $\gamma$ are all zero, 
or there exists a non-empty submatrix $S=A_{I,\gamma}$ of $A$ such that $\sum_{j\in \gamma}S_{ij}= 1$ for all $i \in I$ and such that $\rk (S)\geq n+1-|\gamma|$.
 \end{corollary}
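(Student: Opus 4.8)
The plan is to derive Corollary \ref{cor:wps} from Theorem \ref{propqs} together with the dimension computation already carried out in the proof of Corollary \ref{wps}. First I would recall that for a fake weighted projective space the only cones $\sigma\in\Sigma$ are in bijection with proper subsets $\gamma\subseteq\{0,\dots,n\}$, namely $\sigma(1)$ consists of the rays indexed by the complement, or equivalently one may index the strata $D_\sigma$ directly by the subsets $\gamma=\sigma(1)$; moreover $\Sigma(1)=\{0,\dots,n\}$, $r=n+1$, and $X$ is $\qq$-factorial of Picard rank one, so $\operatorname{rk}\Cl(X)=1$. Hence the general criterion in Theorem \ref{propqs}(2) must be checked over all subsets $\gamma\subseteq\{0,\dots,n\}$ for which $D_\sigma\subseteq B^*(\mathcal L)$. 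The first step is therefore to translate the condition ``$D_\sigma\subseteq B^*(\mathcal L)$'' into the matrix language: by Lemma \ref{bs} this holds exactly when every monomial in the basis (equivalently every vertex of $\Delta(\mathcal L)$, equivalently every row of $A$) involves at least one variable $x_\rho$ with $\rho\in\sigma(1)=\gamma$, i.e. there is \emph{no} row of $A$ whose entries indexed by $\gamma$ are all zero. So the first alternative in the corollary (``$A$ has a row whose entries indexed by $\gamma$ are all zero'') is precisely the statement that $D_\sigma\not\subseteq B^*(\mathcal L)$, in which case the cone imposes no condition and we are done for that $\gamma$.

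Next I would handle the remaining case, when $D_\sigma\subseteq B^*(\mathcal L)$. Here Theorem \ref{propqs}(2) asks for the existence of a non-empty $\gamma'\subseteq\gamma$ with $M_{\gamma'}\neq\emptyset$ and $2\operatorname{rk}(A_{M_{\gamma'},\gamma'})>\operatorname{rk}(A_{M_{\gamma'},\Sigma(1)})$. I would observe that the rows of the submatrix $A_{M_{\gamma'},\Sigma(1)}$ are exponent vectors of monomials of a fixed degree $w\in\Cl(X)\cong\zz$ (up to torsion $\Cl(X)$ has rank one), and since $Q$ sends each $e_\rho$ to a positive integer, the span of these rows, translated to contain the origin, is governed by one linear relation: more precisely, on the linear span of $M_{\gamma'}$ the affine hull has dimension $\operatorname{rk}(A_{M_{\gamma'},\Sigma(1)})$, and because all these points lie in the hyperplane $Q^{-1}(w)$ one gets the bound $\operatorname{rk}(A_{M_{\gamma'},\Sigma(1)}) \le n$. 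I would then set $S=A_{M_{\gamma'},\gamma'}$; note that $\sum_{j\in\gamma'}S_{ij}=1$ for all $i$ by the very definition of $M_{\gamma'}$, that $S$ is non-empty since $M_{\gamma'}\neq\emptyset$, and that the inequality $2\operatorname{rk}(S)>\operatorname{rk}(A_{M_{\gamma'},\Sigma(1)})$ together with $\operatorname{rk}(S)\le|\gamma'|$ should be massaged, using the fake-weighted-projective-space dimension count $\dim p(D_\sigma)=\dim(D_\sigma)-1$ from the proof of Corollary \ref{wps}, into $\operatorname{rk}(S)\ge n+1-|\gamma|$. Here I expect to use that replacing $\gamma'$ by the (possibly larger) $\gamma$ only enlarges the index set $M$ and cannot increase $\operatorname{rk}(A_{M,\Sigma(1)})$ beyond $n$, while the condition $\sum_{j\in\gamma}S_{ij}=1$ can still be arranged by choosing the rows $I$ appropriately.

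The converse direction is the mirror image: given such a submatrix $S=A_{I,\gamma}$ with $\sum_{j\in\gamma}S_{ij}=1$ and $\operatorname{rk}(S)\ge n+1-|\gamma|$, the row condition forces each $i\in I$ to have exactly one nonzero entry (equal to $1$) among columns indexed by $\gamma$ — so $I\subseteq M_\gamma$ — and then the rank of the $|I|\times|\gamma|$ matrix $S$, being at most $|\gamma|$ and at least $n+1-|\gamma|$, gives $2\operatorname{rk}(S)\ge(n+1-|\gamma|)+\operatorname{rk}(S)\ge \operatorname{rk}(A_{I,\Sigma(1)})$ once one knows $\operatorname{rk}(A_{I,\Sigma(1)})\le n$; a small argument (again using that all rows lie on the degree-$w$ hyperplane and the Picard rank is one) upgrades this to the strict inequality needed in Theorem \ref{propqs}. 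Together with Remark after Theorem \ref{propqs} (which lets us pass between $M_\gamma$ and the set of vertices), this yields condition (2) of Theorem \ref{propqs} for the cone $\sigma$ with $\sigma(1)=\gamma$, and hence quasismoothness. The main obstacle I anticipate is bookkeeping the interplay between the three quantities $|\gamma|$, $\operatorname{rk}(S)$, and $\operatorname{rk}(A_{M,\Sigma(1)})$ and making sure the inequality $2\operatorname{rk}(S)>\operatorname{rk}(A_{M,\Sigma(1)})$ is faithfully equivalent to $\operatorname{rk}(S)\ge n+1-|\gamma|$ in \emph{both} directions; the key input making this work is the identity $\operatorname{rk}(A_{M,\Sigma(1)})+\#\{\text{rows}\}$-type relation specialized to rank-one $\Cl(X)$, i.e. that the affine span of any set of lattice points of fixed degree $w$ on $X$ has dimension at most $n$, with the complement accounting for the single grading relation.
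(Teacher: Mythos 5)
There is a genuine gap in your ``only if'' direction. The paper does not (and cannot) obtain the bound $\rk(S)\ge n+1-|\gamma|$ by massaging the rank inequality of Theorem \ref{propqs} at the stratum indexed by $\gamma$: Theorem \ref{propqs} only provides \emph{some} subset $\gamma'\subseteq\gamma$ whose polytopes form a degenerate subcollection, and this is strictly weaker information than the statement you need for the full $\gamma$. Concretely, take $X=\pp^3$ and the linear system with monomial basis $x_1^3,\ x_2^3,\ x_1x_3^2,\ x_2^2x_4$, and $\gamma=\{1,2\}$ (so $D_\gamma=\{x_1=x_2=0\}\subseteq B^*(\mathcal L)$, no row of $A$ vanishes on $\gamma$). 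The condition of Theorem \ref{propqs} holds at this stratum: with $\gamma'=\{1\}$ one has $M_{\gamma'}=\{(1,0,2,0)\}$ and $2\rk(A_{M_{\gamma'},\gamma'})=2>1=\rk(A_{M_{\gamma'},\Sigma(1)})$ (a single point polytope is already a degenerate collection). Yet the only submatrix $S=A_{I,\gamma}$ with $\sum_{j\in\gamma}S_{ij}=1$ is the single row $(1\ 0)$, of rank $1<n+1-|\gamma|=2$. (This linear system is indeed not quasismooth, but the failure is detected only at the deeper stratum $\{x_1=x_2=x_3=0\}$, where all polytopes are empty.) So the implication you plan to prove --- from the Theorem \ref{propqs} inequality at $\gamma$ to the rank bound $\rk(S)\ge n+1-|\gamma|$ --- is simply false stratum by stratum. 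The necessity direction genuinely requires the extra geometric input of Proposition \ref{thm-qs} (equivalently Corollary \ref{wps}): quasismoothness forces $k_\gamma(\mathcal L)\ge\dim(D_\gamma)$ because otherwise the singular locus has a component of positive dimension through the origin, and the irrelevant locus of a fake weighted projective space is just the origin. The paper's proof then merely observes that for a submatrix with row sums $1$ over $\gamma$ one has $\rk(S)=$ number of nonzero columns, whose maximum over $I$ is $k_\gamma(\mathcal L)$, and that ``some row of $A$ vanishes on $\gamma$'' is exactly $D_\gamma\not\subseteq B^*(\mathcal L)$ (this last identification you do have, correctly).

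Two smaller points. First, your bound $\rk(A_{M,\Sigma(1)})\le n$ does not follow from the rows lying on $Q^{-1}(w)$ alone: that only bounds the \emph{affine} span by $n$, so the linear span (which is what the rank computes) could be $n+1$; you need in addition that the rows of $A_{M_\gamma,\Sigma(1)}$ satisfy $\sum_{i\in\gamma}m_i=1$, a hyperplane distinct from $Q^{-1}(w)$ precisely because no generator of $R(X)$ occurs in the basis. Second, in your sufficiency direction the cleaner route to the strict inequality is Remark \ref{ambient}: the translated polytopes lie in a space of dimension $\dim p(D_\gamma)=n-|\gamma|$, so $\rk(A_{M_\gamma,\Sigma(1)})\le k_\gamma+n-|\gamma|$, and $k_\gamma\ge n+1-|\gamma|$ gives $2k_\gamma>\rk(A_{M_\gamma,\Sigma(1)})$ at once; this half of your argument is essentially Corollary \ref{cor-qs} and is fixable, but the necessity half needs to be replaced by the paper's argument via Proposition \ref{thm-qs}.
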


\begin{proof}
Given $\gamma$ as in the statement, this identifies $D_{\gamma}=\{x_i=0: i\in \gamma\}\cap \hat X$.
Observe that $D_{\gamma}$ is not contained in the base locus of $p^*\mathcal L$ 
if and only if there exists a monomial in $V_{\mathcal L}$ not using the variables indexed by $\gamma$,
i.e. $A$ has a row whose entries indexed by $\gamma$ are all zero.
Moreover,  since $\sum_{j\in \gamma}S_{ij}= 1$ for all $i \in I$, there is exactly one non zero 
entry in each row of $S$, so that $\rk(S)$ equals the number of non zero columns, which is $k_{\gamma}(\mathcal L)$.
The proof thus follows from Corollary \ref{wps} observing that $\dim(D_{\gamma})=n-|\gamma|$.
\end{proof}

We now consider the case when the general element  of $\mathcal L$ 
is of Delsarte type, i.e. with the number of monomials equal to the number of variables.
In particular $\Delta(\mathcal L)$ is a simplex if it is full-dimensional. 
We give an alternative proof of the following known result (see  \cite{kreuzerskarke, HK}).

\begin{corollary}\label{cor-simpl}
Let $\mathcal L$ be a monomial linear system of  Delsarte type 
on a fake weighted projective space $X$ whose degree is bigger 
than the degree of each variable in $R(X)$.
Thus $\mathcal L$ is quasismooth if and only if the general element $f$of $\mathcal L$
can be written as sum of disjoint invertible polynomials of the following form 
(called atomic types):
\begin{align*}
f_{Fermat}&:=x^a,\\
f_{chain}&:=x_1^{a_1}x_2+x_2^{a_2}x_3+\ldots+x_k^{a_k},\\
f_{loop}&:=x_1^{a_1}x_2+x_2^{a_2}x_3+\ldots+x_k^{a_k}x_1,
\end{align*}
with $a,a_i>1$.
\end{corollary}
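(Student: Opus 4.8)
The plan is to combine the criterion of Corollary \ref{cor:wps} with the classification of invertible polynomials due to Kreuzer--Skarke. First I would set up the Delsarte condition concretely: since the general element $f$ has exactly $n+1$ monomials in $n+1$ variables, the matrix of exponents $A$ is a square $(n+1)\times(n+1)$ matrix. By the hypothesis on the degree, no monomial is a single variable, so $A$ has no row that is a standard basis vector; moreover $A$ is invertible (otherwise $f$ would not define a hypersurface of the prescribed degree, or the monomials would be linearly dependent over $\qq$ as characters — one checks $\det A\neq 0$ using that the $n+1$ monomials span the weight space $Q^{-1}(w)$, which has the right dimension for a Picard-number-one variety). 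The ``if'' direction is then the easier half: for $f$ a sum of disjoint atomic types one checks directly, block by block, that the condition of Corollary \ref{cor:wps} holds — for a Fermat block it is immediate, and for a chain or loop block on variable set $\gamma'$ one exhibits, for each $\gamma\subseteq\{0,\dots,n\}$, either a row vanishing on $\gamma$ (coming from a different block or from an index of $\gamma'$ not touched) or a submatrix $S$ of the required rank; the key combinatorial fact is that the restriction of the chain/loop exponent matrix to any subset of its variables has corank at most the number of ``broken links'', which is exactly what inequality \eqref{eq} measures.

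For the ``only if'' direction I would argue contrapositively using the structure theory of invertible polynomials. If $f$ is quasismooth and Delsarte with $A$ invertible, then $f$ is a nondegenerate invertible polynomial in the sense of Kreuzer--Skarke (quasismoothness at the torus-fixed points, together with $\det A\neq 0$, is precisely Kreuzer--Skarke's nondegeneracy condition $\Delta(f)\ni$ interior point / the Jacobian criterion). The classification theorem then says that, after permuting variables, $f$ is a sum of disjoint blocks each of atomic type (Fermat, chain, loop). What remains is to verify that Corollary \ref{cor:wps} indeed forces the full nondegeneracy hypothesis of the Kreuzer--Skarke theorem, i.e.\ that quasismoothness in our toric sense is equivalent to the condition ``$f$ and its partials have no common zero away from the origin'' used in \cite{kreuzerskarke}; this is where I would invoke that $\bar X\setminus\hat X=\{0\}$ for a fake weighted projective space, so $S(f)\subseteq\hat X$ being empty is literally ``$S(f)\subseteq\{0\}$''.

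The main obstacle is the ``only if'' direction, and specifically the bookkeeping that translates the rank inequality of Corollary \ref{cor:wps} — which must hold for \emph{every} subset $\gamma\subseteq\{0,\dots,n\}$ — into the statement that $A$ is block-diagonalizable into atomic pieces. One cannot simply quote Kreuzer--Skarke as a black box: I would need to show that our hypothesis (for every $\gamma$, either a row vanishes on $\gamma$ or a suitable $S$ of rank $\geq n+1-|\gamma|$ exists) coincides with their nondegeneracy hypothesis. The cleanest route is to prove the two conditions are equivalent by a direct argument at the torus-invariant strata: a common zero of $f$ and all $\partial f/\partial x_i$ at a point where exactly the coordinates in $\gamma$ vanish exists (for generic $f$ of the given support) precisely when the system obtained by restricting to $D_\gamma$ is compatible, which by Theorem \ref{khov} is governed by independence of the Newton polytopes, which by the computation in the proof of Theorem \ref{propqs} is governed by \eqref{eq}. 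Once this equivalence is in hand, the classification of \cite{kreuzerskarke} applies verbatim and yields the atomic decomposition; the disjointness of the blocks is automatic since distinct blocks involve disjoint variable sets by construction. I would close by remarking that the ``disjoint sum'' here means disjoint in variables, matching the definition used for Berglund--H\"ubsch duality cited in the introduction.
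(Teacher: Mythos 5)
Your route differs in an essential way from the paper's, and the difference is where the problem lies. The paper's proof is self-contained: it applies Corollary \ref{cor:wps} to the subsets $\gamma$ obtained by deleting one index and by deleting two indices, deduces three explicit conditions on the square exponent matrix (after reordering rows, $a_{ii}>1$; every row has off-diagonal sum $\leq 1$; every column has off-diagonal sum $\leq 1$), and then a short combinatorial algorithm yields the Fermat/chain/loop decomposition; the sufficiency direction is checked directly. You instead outsource exactly that structural step to the Kreuzer--Skarke classification, using the toric machinery only to match quasismoothness with their nondegeneracy. But the corollary \emph{is} the known Kreuzer--Skarke statement --- the paper introduces it as ``an alternative proof of the following known result'' --- so citing \cite{kreuzerskarke} for the ``only if'' direction does not supply the derivation from Corollary \ref{cor:wps} that constitutes the actual content here. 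Moreover your proposed detour through Theorem \ref{khov} and Theorem \ref{propqs} to show that the toric notion agrees with the isolated-critical-point condition is unnecessary: on a fake weighted projective space the irrelevant locus is $\{0\}$, so $S(f)\cap\hat X=\emptyset$ is by definition the statement that the partials have no common zero away from the origin.

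There is also a concrete gap inside the reduction itself. Invoking the classification of \emph{invertible} polynomials requires $\det A\neq 0$, and your justification (``the $n+1$ monomials span $Q^{-1}(w)$'') is unsupported: nothing in the Delsarte and degree hypotheses forces the Newton polytope to be full-dimensional, and invertibility of $A$ is in fact a \emph{consequence} of the atomic decomposition, not an admissible input. For instance the monomials $x_0^2x_1,\ x_0x_1x_2,\ x_1x_2^2$ on $\pp^2$ all have degree $3$ exceeding every variable degree, yet their exponent matrix is singular; the general member $x_1(\lambda x_0^2+\mu x_0x_2+\nu x_2^2)$ is not quasismooth, but ruling out such configurations is precisely part of what must be proved, not presupposed. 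So either you must derive $\det A\neq 0$ from quasismoothness before quoting the classification, or you must quote the version of Kreuzer--Skarke stated for nondegenerate configurations without that hypothesis; as written the ``only if'' direction is incomplete, and the combinatorial heart of the statement is never actually argued. Your ``if'' direction (block-by-block verification of Corollary \ref{cor:wps} for disjoint atomic sums) is fine and corresponds to what the paper dismisses as obvious.
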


\begin{proof}
Let $A=(a_{ij})$ be the matrix of exponents associated to $\mathcal L$.
We will show that if $\mathcal L$ is quasismooth then the matrix $A$, after 
reordering its rows, satisfies
\begin{enumerate}[1.]
\item $a_{ii}>1$ for all $i$,
\item for all $i_0$, $\sum_{j\neq i_0} a_{i_0 j}\leq 1$,
\item for all $j_0$, $\sum_{i\neq j_0} a_{i j_0}\leq 1$.
\end{enumerate}
An easy algorithm thus gives the necessary part of the proof.
The sufficiency part is obvious.

Let $\gamma=\{0,\dots, n\}\backslash \{i_0\}$. By Corollary \ref{cor:wps} 
there exists a row $j$ with $\sum_{i\not=i_0} a_{ji}\leq 1$.
By the hypothesis on the degree, this implies that $a_{ji_{0}}>1$ 
and this is the only row having an entry bigger than $1$ in position $i_0$.
This gives $1.$ and $2.$ up to reordering the rows of $A$.

Now assume that $\sum_{i\neq j_0} a_{ij_0}> 1$ for some index $j_0$.
This implies that $a_{rj_0}=a_{sj_0}=1$ for some $r,s\not=j_0$ 
and that $j_0$ is the only column containing $1$ in both positions $r$ and $s$
by 2.
This contradicts Corollary \ref{cor:wps} when $\gamma=\{0,\dots,n\}\backslash \{r,s\}$.
\end{proof}

\begin{remark}
Since fake weighted projective spaces are finite quotients of weighted projective spaces 
satisfying the hypothesis of  Proposition \ref{fq} by \cite[Theorem 6.4]{Bu}, 
then quasismoothness in a fake weighted projective space can be checked in its 
weighted projective space covering.
\end{remark}

\section{Quasismooth Calabi-Yau hypersurfaces}\label{sec-cy}

The concept of quasismoothness appeared in the literature 
on Calabi-Yau varieties since it provides a sufficient condition 
to have good singularities.  We recall that a normal projective variety $Y$ of 
dimension $n$ is a {\em Calabi-Yau variety} if it has canonical singularities, 
$K_Y \cong \Osh_Y$ and $h^i(Y,\Osh_Y ) = 0$ for $0 < i < n$.
Moreover, a hypersurface $Y$ of a projective toric variety $X$ is called 
{\em well-formed} if ${\rm codim}_Y(Y\cap {\rm Sing}(X))\geq 2$, 
where ${\rm Sing}(X)$ is the singular locus of $X$.

\begin{proposition}
Let $Y$ be an anticanonical hypersurface of a projective toric variety $X$. 
If $Y$ is quasismooth and well-formed, then $Y$ is a Calabi-Yau variety. 
\end{proposition}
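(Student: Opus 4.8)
The plan is to verify the three defining properties of a Calabi-Yau variety in turn, using quasismoothness to control the singularities of $Y$ and well-formedness to ensure the adjunction formula and cohomological vanishing survive on the singular ambient space. First I would pass to the smooth locus $X^0\subset X$, whose complement has codimension at least two; since $Y$ is well-formed, $Y^0:=Y\cap X^0$ has complement of codimension at least two in $Y$, so it suffices to establish the numerical and cohomological statements on $Y^0$ and extend them. On $X^0$ the hypersurface $Y^0$ is an honest Cartier divisor in a smooth variety, anticanonical by hypothesis, so the usual adjunction formula gives $K_{Y^0}\cong \Osh_{Y^0}$; because $Y$ is normal and $Y\setminus Y^0$ has codimension at least two, this isomorphism of reflexive sheaves extends to $K_Y\cong\Osh_Y$.

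Next I would address the singularities. By the discussion in the introduction (citing \cite{Bo}), a quasismooth hypersurface has rational singularities; combined with $K_Y\cong\Osh_Y$ being Cartier (hence $Y$ being Gorenstein), rational Gorenstein singularities are canonical — indeed, canonical of index one. This is the step where I expect the subtlety to lie: one must make sure that the pullback $p^*(Y)$ being smooth in $\hat X$ genuinely forces the quotient $Y$ to have only the mild singularities inherited from the toric ambient, and that "rational $+$ Gorenstein $\Rightarrow$ canonical" is being applied correctly (this is a standard fact, e.g. via Elkik–Flenner or Kempf's criterion, but it is the crux). For the simplicial case one could alternatively invoke \cite[Proposition 3.5]{BC} to say $Y$ is a $V$-manifold with abelian quotient singularities, which are automatically canonical in the Gorenstein case; the general case rests on the rational singularities input.

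Finally I would prove the Hodge-theoretic vanishing $h^i(Y,\Osh_Y)=0$ for $0<i<n$. The cleanest route is to use that $Y$ is an ample (anticanonical) divisor on the complete toric variety $X$: twisting the ideal sheaf sequence $0\to\Osh_X(-Y)\to\Osh_X\to\Osh_Y\to 0$ and applying Kawamata–Viehweg vanishing — valid since $X$ has rational (toric) singularities and $-Y=K_X$, so $\Osh_X(-Y)=\omega_X$ — gives $H^i(X,\Osh_X(-Y))=0$ for $i<n$ together with $H^i(X,\Osh_X)=0$ for $i>0$ (toric), whence $H^i(Y,\Osh_Y)=0$ for $0<i<n-1$, and the top piece $i=n-1$ follows from Serre duality on $Y$ using $K_Y\cong\Osh_Y$ once $Y$ is known to be Cohen–Macaulay (which it is, having rational singularities). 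Assembling these three points — trivial canonical class via adjunction and normality, canonical singularities via rational $+$ Gorenstein, and the cohomological vanishing via Kawamata–Viehweg on $X$ plus Serre duality on $Y$ — completes the proof.
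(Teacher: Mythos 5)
Your argument is correct in substance, but it follows a genuinely different route from the paper in its first half. The paper disposes of the hard part in one line by citing \cite{ACG} (Proposition 2.12 there) for the two statements that $Y$ has canonical singularities and $K_Y\cong\Osh_Y$; you instead re-derive both: adjunction on $Y^0=Y\cap X^0$ together with the reflexive-extension argument (this is exactly where well-formedness enters, and you should note explicitly that normality of $Y$ is itself a consequence of quasismoothness, cf.\ the proof of Proposition 2.4 in \cite{ACG}), and then ``rational $+$ Gorenstein $\Rightarrow$ canonical'' with \cite{Bo} supplying rationality. That is a legitimate, more self-contained alternative, and it makes visible where each hypothesis is used, at the price of invoking the Elkik/Koll\'ar--Mori-type fact that the paper never needs.

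For the cohomological vanishing your outline matches the paper's (the same ideal-sheaf sequence $0\to\Osh_X(K_X)\to\Osh_X\to\Osh_Y\to 0$), but there is a flaw in how you obtain the vanishing of $H^{i+1}(X,\Osh_X(K_X))$. You appeal to Kawamata--Viehweg, which requires $-K_X\sim Y$ to be ample (or nef and big); ampleness is not among the hypotheses --- $X$ is only a projective toric variety and $Y$ an anticanonical hypersurface. No positivity is needed: since toric varieties are Cohen--Macaulay, Serre--Grothendieck duality gives $h^{i+1}(X,\Osh_X(K_X))=h^{\dim X-i-1}(X,\Osh_X)=0$ for all $i<\dim X-1$, which already covers the full range $0<i<\dim Y$; this is precisely the paper's argument. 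Your weaker claimed range forces the separate treatment of the top degree via Serre duality on $Y$, and that step is circular when $\dim Y=2$ (it only returns $h^1(Y,\Osh_Y)\cong h^1(Y,\Osh_Y)$). Replacing the Kawamata--Viehweg step by duality on $X$ removes both issues, and the rest of your proof stands.
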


\begin{proof}
 By \cite[Proposition 2.12]{ACG}, $Y$ has canonical singularities and $K_Y \cong \Osh_Y$. 
Moreover the exact sequence of sheaves
 \[
 \xymatrix{
 0\ar[r] & \Osh_{X}(K_X)\ar[r] & \Osh_X\ar[r] & \Osh_Y\ar[r]& 0,
 }
 \]
 gives the exact sequence
 \[
 \xymatrix{
 \dots\ar[r] & H^i(X,\Osh_X)\ar[r] & H^i(Y,\Osh_Y)\ar[r] &H^{i+1}(X,\Osh_X(K_X))\ar[r] &\dots. 
 }
 \]
Since $h^i(X,\Osh_X)=0$ for $i>0$ and 
$h^{i+1}(X,\Osh_X(K_X))=h^{\dim(X)-i-1}(X,\Osh_X)$ for $i<\dim(X)-1$ 
by Serre-Grothendieck duality, we have that 
$h^i(Y,\Osh_Y)=0$ for $0<i<\dim(Y)$.
Thus $Y$ is a Calabi-Yau variety.
\end{proof}

A natural question, which is the original motivation of the present work,
is how quasismoothness behaves with respect to known dualities between families 
of Calabi-Yau varieties. 
For example, we recall the following fact, which is the basis of the 
Berglund-H\"ubsch-Krawitz duality \cite{BH, Kr}.

\begin{proposition}
Let $\mathcal L$ be a monomial linear system 
of Delsarte type in a weighted projective space $X$  
with matrix of exponents $A$.
There exist a weighted projective space  $X'$
and a  monomial linear system  $\mathcal L'$ on $X'$ 
whose matrix of exponents is $A^T$. 
Moreover, $\mathcal L$ is quasismooth if and only if the same holds for $\mathcal L'$.
\end{proposition}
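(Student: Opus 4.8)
The plan is to reduce this proposition to the combinatorial criterion of Corollary~\ref{cor:wps} (equivalently Corollary~\ref{cor-simpl}) and to show that the criterion is insensitive to transposing $A$. First I would construct $X'$ and $\mathcal L'$: given the Delsarte matrix $A=(a_{ij})$ of $\mathcal L$, which is an $(n+1)\times(n+1)$ matrix with nonnegative integer entries (and full rank, since the monomials span a space of the correct dimension — the degree being positive forces $\det A\neq 0$ after the usual normalization), let $A^T$ be its transpose. The rows of $A^T$ are again $n+1$ exponent vectors in $\zz^{n+1}$; one checks these lie in a common affine hyperplane $Q'^{-1}(w')$ for a suitable weight system $w'$, which defines the weighted projective space $X'=\pp(w')$, and then $\mathcal L'$ is the monomial linear system on $X'$ with exponent matrix $A^T$. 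The only point needing care here is well-definedness of the weights: the weight vector $w'$ is (up to scaling) the unique positive vector in the kernel of a suitable matrix built from $A^T$, and one must know such a positive vector exists — this is standard for invertible $A$ with the stated positivity of the degree, and is exactly the setup used in the Berglund--H\"ubsch construction cited via \cite{BH, Kr}.

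Next, I would verify that the hypothesis ``monomial basis does not contain a generator of $R(X)$'' is preserved under transposition, so that Corollary~\ref{cor:wps} applies to both $\mathcal L$ and $\mathcal L'$. A generator $x_j$ of $R(X)$ appearing as a monomial means $A$ has a row equal to a standard basis vector $e_j$; after transposing this becomes a column of $A^T$ equal to $e_j$, which by a short argument (using that the degree of $\mathcal L'$ exceeds each variable degree, or directly) again corresponds to the forbidden situation, and conversely. So the hypotheses of Corollary~\ref{cor:wps} hold for $\mathcal L'$ precisely when they hold for $\mathcal L$. Here one should be slightly careful: what is literally forbidden for $\mathcal L'$ is a \emph{row} of $A^T$ being a standard basis vector, i.e.\ a \emph{column} of $A$ being $e_i$; so strictly one is invoking the symmetric criterion of Corollary~\ref{cor-simpl} (conditions 1--3 there are manifestly symmetric under $A\leftrightarrow A^T$), which is the cleanest route. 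Indeed, the fastest proof is: by Corollary~\ref{cor-simpl}, $\mathcal L$ is quasismooth iff $f$ is a disjoint sum of Fermat, chain, and loop atomic types; the transpose of a Fermat block is a Fermat block, the transpose of a chain $x_1^{a_1}x_2+\dots+x_k^{a_k}$ is again a chain (read in the opposite order), and the transpose of a loop is a loop; hence $A^T$ also decomposes into atomic types, and by Corollary~\ref{cor-simpl} again $\mathcal L'$ is quasismooth.

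The main obstacle I expect is not the equivalence of quasismoothness — once Corollary~\ref{cor-simpl} is in hand that is essentially a remark about the symmetry of the atomic-type decomposition under transposition — but rather the bookkeeping needed to produce $X'$ and $\mathcal L'$ honestly: showing $A^T$ has rows lying on a positive-weight hyperplane, that the resulting weight system is well-formed enough to be called a weighted projective space, and that the degree of $\mathcal L'$ genuinely exceeds each variable weight so that Corollary~\ref{cor-simpl} is applicable on the $X'$ side. I would handle this by citing the standard Berglund--H\"ubsch transpose construction (\cite{BH}, \cite{Kr}) for the existence and positivity of $w'$, noting that invertibility of $A$ is exactly what makes the transpose construction symmetric, and then remarking that the degree condition transposes because $\det A\neq 0$ forces every diagonal-dominant atomic block to have $a_{ii}>1$, which is the condition that makes the degree strictly larger than each weight on both sides. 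With those points recorded, the proof closes by two applications of Corollary~\ref{cor-simpl}, one in each direction.
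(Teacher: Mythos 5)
Your proposal is correct and follows essentially the same route as the paper: the paper also obtains $X'$ and $\mathcal L'$ by citing the standard transpose (Berglund--H\"ubsch type) construction, there via \cite[Section 4.1]{ACG}, and deduces the quasismoothness equivalence from (the proof of) Corollary~\ref{cor-simpl}, whose characterization --- conditions 1--3 on $A$, equivalently the Fermat/chain/loop decomposition --- is manifestly symmetric under $A\leftrightarrow A^T$, exactly as you argue.
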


\begin{proof}
The existence of $X'$ and $\mathcal L'$ is constructive, see \cite[Section 4.1]{ACG}.
Quasismoothness of $\mathcal L'$ follows from (the proof of) Corollary \ref{cor-simpl}.
\end{proof}

In \cite{ACG} Berglund-H\"ubsch-Krawitz duality has been generalized  
to give a duality between pairs of polytopes. We recall the main definitions and results.
Given a lattice $M$ and a polytope $P\subset M_{\qq}$ containing the origin in its interior, we will denote by 
$P^*$ the polar of $P$ as in \cite[\S 2.2]{CLS}.

\begin{definition}  A polytope $P\subset M_{\qq}$ with vertices in $M$ 
is \emph{canonical} if ${\rm Int}(P)\cap M=\{0\}$.
A pair of polytopes $(P_1,P_2)$ with $P_1\subseteq P_2\subset M_{\qq}$ is  a \emph{good pair} 
if $P_1$ and $P_2^*$ are canonical.
\end{definition}

To any good pair $(P_1,P_2)$ we can associate a monomial linear system $\mathcal L_1$ in a toric variety $X$ 
as follows. Let $X=X_{P_2}$ be the toric variety defined by the normal fan of $P_2$.
Then $P_2$ is the anticanonical polytope of $X$ and its lattice points give a monomial basis of the anticanonical linear system $|-K_X|$.
The polytope $P_1$ defines a monomial linear subsystem $\mathcal L_1$ of $|-K_X|$ generated by the monomials 
corresponding to its lattice points. Moreover the following holds.

\begin{theorem}\cite[Theorem 1]{ACG}\label{thm1ACG}
Let $(P_1,P_2)$ be a good pair of polytopes and let $X=X_{P_2}$ be the 
toric variety defined by the normal fan of $P_2$.
Then $X$ is a $\qq$-Fano toric variety and the general element of the 
monomial linear system associated to $P_1$ is a Calabi-Yau variety.
\end{theorem}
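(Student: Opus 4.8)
The plan is to establish the three assertions — that $X = X_{P_2}$ is $\qq$-Fano, that a general element $Y$ of the linear system $\mathcal L_1$ associated to $P_1$ is quasismooth, and that $Y$ is a Calabi-Yau variety — more or less independently, using the combinatorial machinery of the previous sections for the middle (and hardest) step. First I would handle the $\qq$-Fano claim: since $X$ is by construction the toric variety of the normal fan of $P_2$, the polytope $P_2$ is the polytope of $-K_X$, and the hypothesis that $P_2^*$ is canonical — i.e.\ $\operatorname{Int}(P_2^*)\cap N = \{0\}$ — is precisely the statement that $-K_X$ is an ample (hence $\qq$-Cartier, after the projectivity built into taking the normal fan) divisor with the property that $X$ has at worst canonical Gorenstein-type singularities; more carefully, $P_2^*$ being canonical says the fan of $X$ is the face fan of $P_2^*$ and reflexivity/canonicity of $P_2^*$ gives that $X$ is $\qq$-Fano. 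This is essentially a dictionary translation and I expect it to be short.

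The substantive step is quasismoothness of the general $Y \in \mathcal L_1$. Here I would invoke Theorem \ref{thm1}: it suffices to check that for every $\sigma \in \Sigma$ with $D_\sigma \subseteq B^*(\mathcal L_1)$, the collection of non-empty polytopes $\{\Delta_\sigma^\rho(\mathcal L_1)\}_{\rho\in\sigma(1)}$ is dependent. The key input is that $P_1$ is canonical, meaning $\operatorname{Int}(P_1)\cap M = \{0\}$, which strongly constrains which lattice points of $P_2$ can lie in $P_1$ and hence which monomials appear in a general defining equation $f = \sum_{\rho\in\sigma(1)} x_\rho f_\rho$. Concretely, I would translate the condition $D_\sigma \subseteq B^*(\mathcal L_1)$ via Lemma \ref{bs} and Remark \ref{polytopes} into a statement about the faces of $\Delta(f)$ cut out by $\sum_{\rho\in\sigma(1)} m_\rho = 1$, and then use the canonicity of $P_1$ — reinterpreted through the standard correspondence between lattice points of the anticanonical polytope and monomials, where the origin of $M$ corresponds to the "central" monomial $\prod_\rho x_\rho$ — to bound the dimension of the span of the $\Delta_\sigma^\rho(\mathcal L_1)$. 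The point should be that canonicity forces enough of these polytopes to be empty or low-dimensional that Corollary \ref{cor-qs}'s inequality $k_\sigma(\mathcal L_1) > \dim p(D_\sigma)$ fails to be needed and instead the dependence condition of Theorem \ref{thm1} holds directly; alternatively, one argues via the matrix criterion of Theorem \ref{propqs}. I expect this to be the main obstacle: it requires carefully matching the interior-lattice-point condition on $P_1$ with the face geometry of $\Delta(f)$ relative to each cone $\sigma$, and this is exactly the kind of combinatorial bookkeeping that is easy to get wrong.

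Finally, once quasismoothness is in hand, the Calabi-Yau conclusion should follow by citing \cite[Proposition 2.12]{ACG} together with the cohomology-vanishing argument already given in the proof of the Proposition preceding this theorem: since $Y$ is an anticanonical hypersurface of the projective toric variety $X$ and $Y$ is quasismooth (and one checks well-formedness, which again follows from the canonicity of $P_1$ constraining how $Y$ meets $\operatorname{Sing}(X)$), one gets canonical singularities, $K_Y \cong \Osh_Y$, and $h^i(Y,\Osh_Y) = 0$ for $0 < i < \dim Y$ from the Koszul sequence and Serre-Grothendieck duality. So the architecture is: $\qq$-Fano by dictionary, quasismoothness by Theorem \ref{thm1} plus canonicity of $P_1$, well-formedness by the same canonicity, and Calabi-Yau by the earlier Proposition — with all the difficulty concentrated in the second step.
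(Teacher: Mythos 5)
There is a genuine gap, and it is fatal to the architecture you chose. Your middle step asserts that canonicity of $P_1$ (i.e.\ the good-pair hypothesis) forces the general member of $\mathcal L_1$ to be quasismooth, to be checked via Theorem \ref{thm1}. That implication is false, and the paper itself contains a counterexample: in Section \ref{sec-cy}, the pair $(P_2^*,\Delta(\mathcal L_1)^*)$ obtained by dualizing the good pair of Example \ref{ex1} is again a good pair, yet the associated linear system $\mathcal L_2^*$ on $X_{\Delta(\mathcal L_1)^*}$ is not quasismooth (it is singular at $(1,1,1,1,0,0,0,0)$). What the paper proves is only the converse direction, Proposition \ref{gp}: if $\mathcal L_1$ is quasismooth then $(P_1,P_2)$ is a good pair; you are implicitly using the reverse implication, which does not hold. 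Consequently your final step also collapses: the Proposition at the start of Section \ref{sec-cy} needs both quasismoothness and well-formedness as hypotheses, and neither is available from the good-pair assumption alone.

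Note also that Theorem \ref{thm1ACG} does not claim quasismoothness of the general member, only that it is a Calabi-Yau variety, and the paper does not reprove it: it is quoted from \cite[Theorem 1]{ACG}, where the Calabi-Yau property (in particular the canonicity of the singularities of the general hypersurface) is derived directly from the canonicity of $P_1$ and $P_2^*$ by toric adjunction/discrepancy-type arguments, not by exhibiting a quasismooth member. Your first step ($X$ is $\qq$-Fano because the normal fan of $P_2$ is the face fan of the canonical polytope $P_2^*$, so $-K_X$ is ample $\qq$-Cartier and $X$ has canonical singularities) is essentially the right dictionary translation, but be careful not to conflate ``canonical'' with ``reflexive'': $P_2^*$ canonical does not require $P_2^*$ to be a lattice polytope. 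The salvageable route would be: prove the $\qq$-Fano statement as you indicate, then show canonical singularities and triviality of $K_Y$ for the general $Y\in\mathcal L_1$ by the methods of \cite{ACG} (or simply cite it), and obtain the cohomology vanishings from the Koszul sequence as in the paper; quasismoothness cannot be an intermediate step.
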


\begin{remark}
Observe that the polytope $P_1$ in the statement of Theorem \ref{thm1ACG} is not exactly
the Newton polytope $\Delta(\mathcal L_1)$ of the monomial linear system as defined in Section \ref{sec-pre} 
but $\Delta(\mathcal L_1)=\alpha(P_1)+(1,\dots,1)$, where $\alpha$ is the dual of the map $\zz^r\to N$, sending $e_i$ to the primitive generator of the $i$-th ray of the fan of $X$.\end{remark}

The following shows that quasismooth monomial linear systems 
in $\qq$-Fano toric varieties give rise to good pairs.

 \begin{proposition} \label{gp}
Let $P_2\subset M_\qq$ be a polytope such that $P_2^*$ is canonical and 
let $P_1\subseteq P_2$ be a lattice polytope.
If the monomial linear system $\mathcal L_1$ associated to $P_1$ 
is quasismooth, then $(P_1,P_2)$ is a good pair (i.e. $P_1$ is a canonical polytope).
 \end{proposition}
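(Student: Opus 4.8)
The plan is to argue by contradiction: suppose $P_1$ is not canonical, i.e. there is a nonzero lattice point in its interior, and deduce that $\mathcal L_1$ is not quasismooth by producing a stratum $D_\sigma\subseteq B^*(\mathcal L_1)$ that violates the criterion of Theorem~\ref{thm1} (equivalently Corollary~\ref{empty} or the bound in Corollary~\ref{cor-qs}). First I would set up coordinates: $X=X_{P_2}$ has rays $\rho_1,\dots,\rho_r$ with primitive generators $v_i\in N$, and under the map $\alpha$ dual to $e_i\mapsto v_i$ the Newton polytope is $\Delta(\mathcal L_1)=\alpha(P_1)+(1,\dots,1)$, sitting inside the positive orthant of $\rr^r$ and inside the anticanonical polytope $\Delta(|-K_X|)=\alpha(P_2)+(1,\dots,1)$, whose lattice points are exactly the $m\in\zz^r_{\ge 0}$ with $\langle m, v\rangle$ in the appropriate degree class. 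The key translation is that a lattice point $u\in M$ lies in the interior of $P_1$ iff $\langle u,v_i\rangle > -a_i$ for every facet (where $-K_X=\sum a_i D_i$... here $a_i=1$), which after the shift by $(1,\dots,1)$ says: the corresponding exponent vector $m(u)=\alpha(u)+(1,\dots,1)\in\zz^r$ satisfies $m(u)_i\ge 1$ for all $i$, i.e. $m(u)$ has all coordinates strictly positive, hence $m(u)$ lies in $\Delta(\mathcal L_1)$ but uses every variable.

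Next I would exploit this: if $P_1$ is not canonical there is a nonzero $u\in\mathrm{Int}(P_1)\cap M$, so $m:=m(u)$ is an exponent appearing among the monomials generating $V_{\mathcal L_1}$ (or at least lies in $\Delta(\mathcal L_1)$, which by Lemma~\ref{bs} and Corollary~\ref{vert} is all that matters) with $m_i\ge 1$ for every $i\in\Sigma(1)$. Since $u\ne 0$, the monomial $x^m$ is not a coordinate of the torus-fixed point; more importantly, because $m_i\geq 1$ for all $i$, every monomial in $V_{\mathcal L_1}$ — in particular every vertex monomial of $\Delta(\mathcal L_1)$, after possibly re-examining — the crucial point is that the base locus $B^*(\mathcal L_1)$ is large. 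Concretely I would show: since $-K_X$ is the anticanonical class and $P_2^*$ is canonical, $0$ is the unique interior lattice point of $P_2$, and the monomials of $|-K_X|$ corresponding to $P_1$ all have the form $x^{m}$ with $\sum$-type degree constraints; the relation $m_i\ge 1$ for the interior point $u$ forces, for \emph{every} maximal cone $\sigma\in\Sigma$, that $x^{\hat\sigma}$ divides... Let me phrase it as the clean statement I actually want: for each ray $\rho_i$ there is no monomial in a basis of $V_{\mathcal L_1}$ omitting $x_{\rho_i}$ \emph{and} lying in a full-dimensional cone's stratum — equivalently, some $D_\sigma$ with $\dim\sigma(1)$ large lies in $B^*(\mathcal L_1)$ while all the associated polytopes $\Delta_\sigma^\rho(\mathcal L_1)$ are empty or too few, contradicting Theorem~\ref{thm1}.

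The cleanest route, which I would adopt, is this: assume $u\neq 0$ in $\mathrm{Int}(P_1)\cap M$. Pick a maximal cone $\sigma$ of the normal fan of $P_2$ on whose dual face $u$ does \emph{not} lie — such $\sigma$ exists since $u$ is interior, so $u$ lies in no facet, hence every monomial supported near $u$ involves all the variables $x_\rho$, $\rho\in\sigma(1)$. Then $D_\sigma\subseteq B^*(\mathcal L_1)$ and, writing a general $f=\sum_{\rho\in\sigma(1)}x_\rho f_\rho$, the restrictions $f_\rho|_{D_\sigma}$ are all zero because any exponent vector $m$ of a monomial of $f$ with $m_\rho=1$ and $m_\gamma=0$ for $\gamma\in\sigma(1)\setminus\{\rho\}$ would correspond to a lattice point of $P_1$ on the boundary that is incompatible with $u$ being interior (the degree/convexity constraint from $P_1\subseteq P_2$ and $0\in\mathrm{Int}(P_2)$ rules it out). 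Hence every $\Delta_\sigma^\rho(\mathcal L_1)=\emptyset$, and by Corollary~\ref{empty} the general element of $\mathcal L_1$ is not quasismooth, contradiction. The main obstacle will be making the combinatorics of that last step precise: carefully identifying which maximal cone $\sigma$ to choose and verifying that the emptiness of all $\Delta_\sigma^\rho(\mathcal L_1)$ really follows from $0$ and $u$ both being interior lattice points of $P_2$ (so that no lattice point of $P_1$ can have the required "coordinate pattern" on $\sigma(1)$). Once that dictionary between interior lattice points of $P_1$ and forbidden exponent patterns is nailed down, the rest is immediate from the criteria already proved.
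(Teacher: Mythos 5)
Your reduction misstates the negation of canonicity, and this hides the main content of the statement. By the paper's definition, $P_1$ is canonical when ${\rm Int}(P_1)\cap M=\{0\}$; the failure of this does not mean ``there is a nonzero interior lattice point'': it can just as well mean that the origin is \emph{not} an interior point of $P_1$ (possibly with no interior lattice points at all, e.g.\ a thin lattice polytope sitting near $\partial P_2$). That case is precisely what the paper's proof handles: if $0$ is not interior there is $n\in N_{\qq}$, $n\neq 0$, with $(n,m)\geq 0$ on $P_1$; writing $n=\sum_{i\in I}\alpha_i\rho_i$ with the rays $\rho_i$ lying in a common cone of the normal fan (a cone over a face of $P_2^*$), every monomial of $\mathcal L_1$ satisfies $\sum_{i\in I}a_i\geq 1$, and strictly bigger than $1$ when $|I|>1$; hence $D_I\subseteq B^*(\mathcal L_1)$ while all the polytopes $\Delta_\sigma^\rho(\mathcal L_1)$ are empty, contradicting quasismoothness by Corollary~\ref{empty}. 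The complementary implication --- that $0\in{\rm Int}(P_1)$ together with $P_1\subseteq P_2$ and $P_2^*$ canonical already forces ${\rm Int}(P_1)\cap M=\{0\}$ --- is not something you can skip: the paper gets it from \cite[Corollary 1.6]{ACG}, which you never invoke. Consequently, even if your argument were completed it would only show ${\rm Int}(P_1)\cap M\subseteq\{0\}$, not that the origin is interior, so the proposition would not follow.

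Moreover, the case you do attack is not actually carried out. From a nonzero $u\in{\rm Int}(P_1)\cap M$ you correctly obtain a monomial $x^{m(u)}$ divisible by every variable, but one such monomial imposes no constraint on $B^*(\mathcal L_1)$, which is governed by the vertices of $\Delta(\mathcal L_1)$ (Lemma~\ref{bs}); and your central claim --- that some maximal cone $\sigma$ has $D_\sigma\subseteq B^*(\mathcal L_1)$ with all $\Delta_\sigma^\rho(\mathcal L_1)=\emptyset$ because boundary lattice points of $P_1$ with the pattern $m_\rho=1$, $m_\gamma=0$ for $\gamma\in\sigma(1)\setminus\{\rho\}$ are ``incompatible with $u$ being interior'' --- is asserted, not proved, and you yourself flag it as the main obstacle. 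There is no evident reason an interior lattice point excludes such boundary points. So both halves are missing: the genuinely needed case ($0$ not interior to $P_1$) is never considered, and the case you consider rests on an unproved combinatorial claim; the paper's route via a supporting functional at the origin and Corollary~\ref{empty} is the argument you would need.
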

 
 \begin{proof}
By \cite[Corollary 1.6]{ACG}  it is enough to prove 
that the origin is an interior point of $P_1$.
Assume the contrary, i.e. that the origin is contained 
in a facet of $P_1$. Then there exists $n\in N_{\qq}$ 
such that $(n,m)\geq 0$ for all $m\in P_1$.
The vector $n$ is contained in a cone over the faces 
of $P_2^*$, thus we can write 
$n=\sum_{i\in I}\alpha_i\rho_i$ with $\alpha_i\in \qq$ 
positive coefficients.
Thus
\[
(n,m)\geq 0\Leftrightarrow \sum_{i\in I}\alpha_i(m,\rho_i)\geq 0\Leftrightarrow \sum_{i\in I}\beta_i(m,\rho_i)\geq 0,
\]
 where $\beta_i$ are positive integers. 
 We recall that $(m,\rho_i)+1=a_i$ is the exponent of $x_i$ in the monomial corresponding to $m$.
 Thus, if $\beta=\max\{\beta_i\}_{i\in I}$, the above inequality gives
 \[
  \sum_{i\in I}a_i\geq \frac{1}{\beta}\sum_{i\in I}\beta_i.
 \]
 Observe that $D_I=\{x_i=0: i \in I\}$ is contained in the base locus of 
  $\mathcal L_1$ since 
 the above inequality implies that at least one of the exponents 
 $a_i, i\in I$ is positive in any monomial of the basis of $\mathcal L_1$.
 If $|I|=1$, then $\mathcal L_1$ contains the divisor $x_i=0$ 
 in its base locus, thus clearly it is not quasismooth.
On the other hand, if $|I|>1$, the right hand side of the inequality is bigger than $1$.
 This implies that  $\mathcal L_1$  is not quasismooth by Corollary \ref{empty}.
 \end{proof}

A good pair $(P_1,P_2)$ has a natural dual $(P_2^*,P_1^*)$, which is still a good pair.
This gives rise to a duality between the corresponding  linear systems $\mathcal L_1$ 
and $\mathcal L_2^*$  of Calabi-Yau varieties on $X_{P_2}$ and $X_{P_1^*}$ respectively.
 In \cite[Theorem 2]{ACG} it is proved that, in case $P_1,P_2$ are both simplices, this 
duality is exactly Berglund-H\"ubsch-Krawitz duality.
With this in mind, we asked ourselves: is quasismoothness preserved by this duality? More precisely:  
does $\mathcal L_1$ quasismooth imply $\mathcal L_2^*$ quasismooth?
Unfortunately the answer is no, as the following example shows.

\begin{example}

Let $X=\pp^2\times\pp^1$ and let $P_2$ be its anticanonical polytope. Let $\mathcal L_1$ be the quasismooth linear system as in 
 Example \ref{ex1}. Thus $(\Delta(\mathcal L_1),P_2)$ is a good pair by Proposition \ref{gp}.
The toric variety $X_{\Delta(\mathcal L_1)^*}$ has Cox ring with variables $y_1,\dots, y_8$ 
with integer grading given by the matrix
\[
\begin{pmatrix*}[c]
0 & 1 & 0 & 2& 3& 0& 0& 0\\
0& 1& 1& 0& 0& 1& 1& 0\\
0& 1& 1& 0& 1& 0& 0& 1\\
0& 1& 1& 1& 2& 1& 0& 0\\
1& 0& 0& 1& 0& 1& 1& 0
 \end{pmatrix*},    
 \]   
 quotient grading $1/2( 0, 0, 0, 0, 0, 0, 1, 1 )$ and 
the components of its irrelevant ideal are:
 \[
 \begin{array}{c}
   (y_7, y_6, y_4), (y_5, y_4), (y_6, y_3), (y_8, y_5, y_3), (y_7, y_2),\\ 
        (y_8, y_5, y_2), (y_8, y_3, y_2), (y_5, y_3, y_2), (y_8, y_1),\\ 
        (y_7, y_6, y_1), (y_7, y_4, y_1), (y_6, y_4, y_1).
        \end{array}
\]
 The dual  pair $(P_2^*,\Delta(\mathcal L_1)^*)$ gives a  monomial linear system $\mathcal L_2^*$ generated by
\[
y_2^2y_4^2y_6^2y_8^2,\
y_3^3y_4^3y_7y_8, \
y_5^2y_6^2y_7^2y_8^2,\
y_1^3y_2^3y_5y_6, \
y_1^2y_3^2y_5^2y_7^2.
\]
One can observe that $\mathcal L_2^*$ is not quasismooth at the point $(1, 1, 1, 1, 0, 0, 0, 0)$.
\end{example}

\section{Applications in low dimension}\label{sec-app}

\begin{theorem} A  monomial linear system $\mathcal L$ of 
curves in a projective toric surface $X$ with monomial basis $S$ 
is quasismooth if and only if either $\mathcal L$ is base point free or 
 the following hold
\begin{enumerate}
\item if $\{x_i=0\}\subseteq B^*(\mathcal L)$ then  $S$ contains a unique monomial where $x_i$ appears with exponent one;
\item if $\{x_i=x_j=0\}\subseteq B^*(\mathcal L)$ then $S$ contains a monomial $x^a$ where $a_i+a_j=1$.
\end{enumerate}
\end{theorem}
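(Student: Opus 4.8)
The plan is to apply Theorem~\ref{thm1} specialized to the case of a toric surface, where the only strata $D_\sigma$ in the base locus correspond to cones $\sigma$ of dimension one or two, i.e. to $\{x_i=0\}$ or $\{x_i=x_j=0\}$ for rays $\rho_i,\rho_j$. First I would dispose of the case $B(\mathcal L)=\emptyset$, which is quasismooth by the Proposition following Proposition~\ref{zariski}. So assume $B^*(\mathcal L)\neq\emptyset$ and work stratum by stratum, writing $f=\sum_{\rho\in\sigma(1)}x_\rho f_\rho$ for a general $f\in V_{\mathcal L}$.

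For a one-dimensional cone $\sigma$ with $\sigma(1)=\{\rho_i\}$ and $D_\sigma=\{x_i=0\}\subseteq B^*(\mathcal L)$, the set $\{\Delta_\sigma^{\rho_i}(\mathcal L)\}$ consists of a single polytope. By definition a singleton collection of non-empty polytopes is dependent precisely when that polytope is a point (a $0$-dimensional set translates into a $0$-dimensional subspace, $\ell-1=0$); if the polytope is empty the collection is empty, hence independent, and $\mathcal L$ is not quasismooth by Corollary~\ref{empty}. By Remark~\ref{polytopes}, $\Delta_\sigma^{\rho_i}(\mathcal L)+e_{\rho_i}$ is the convex hull of the lattice points $m\in\Delta(f)$ with $m_{\rho_i}=1$; this is a single point exactly when there is a unique monomial in $S$ in which $x_i$ appears with exponent one (and no monomial in which it appears with exponent zero, which is automatic since $\{x_i=0\}\subseteq B^*(\mathcal L)$ forces every monomial to be divisible by $x_i$). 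This gives condition (i).

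For a two-dimensional cone $\sigma$ with $\sigma(1)=\{\rho_i,\rho_j\}$ and $D_\sigma=\{x_i=x_j=0\}\subseteq B^*(\mathcal L)$, the stratum $D_\sigma$ is a torus of dimension $r-2$, and $p(D_\sigma)$ is a point, so $\dim p(D_\sigma)=0$. By Remark~\ref{ambient} every non-empty $\Delta_\sigma^\rho(\mathcal L)$ lies in a $0$-dimensional space, hence is a point. If at least one of $\Delta_\sigma^{\rho_i}(\mathcal L),\Delta_\sigma^{\rho_j}(\mathcal L)$ is non-empty, i.e. $k_\sigma(\mathcal L)\geq 1$, then the collection consists of one or two points and is therefore dependent (a single point translates into a $0$-subspace; two points in a $0$-dimensional ambient space are both the origin, so the subcollection of one of them is degenerate). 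Conversely, by Corollary~\ref{empty}, if both are empty then $\mathcal L$ is not quasismooth. By Remark~\ref{polytopes}, $\Delta_\sigma^{\rho_i}(\mathcal L)$ is non-empty iff some $m\in\Delta(f)\cap\zz^r$ has $m_{\rho_i}=1,m_{\rho_j}=0$, and symmetrically for $\rho_j$; since every monomial of $S$ has $a_i+a_j\geq 1$, the existence of such an $m$ for $\rho_i$ or $\rho_j$ is exactly the existence of a monomial $x^a\in S$ with $a_i+a_j=1$. This yields condition (ii). Combining the three cases with Theorem~\ref{thm1} — quasismoothness holds iff every stratum in $B^*(\mathcal L)$ gives a dependent collection — completes the proof.

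The main obstacle, and the only place requiring care, is matching the combinatorial ``dependent'' condition of Theorem~\ref{thm1} with the elementary statements (i)--(ii): one must be attentive that an \emph{empty} collection is independent by convention while a non-empty collection of points is always dependent, and that the phrase ``$x_i$ appears with exponent one'' must be read together with the standing hypothesis $\{x_i=0\}\subseteq B^*(\mathcal L)$, which rules out exponent zero and makes ``unique monomial with exponent one'' equivalent to ``$\Delta_\sigma^{\rho_i}(\mathcal L)$ is a single point''. Everything else is a direct translation via Remarks~\ref{ambient} and~\ref{polytopes}.
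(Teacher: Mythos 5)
Your proposal is correct and takes essentially the same route as the paper's proof: apply Theorem \ref{thm1} to the one- and two-dimensional cones whose strata lie in $B^*(\mathcal L)$, note via Remark \ref{ambient} that for a two-dimensional cone the polytopes are points or empty, and translate dependence (resp.\ emptiness, via Corollary \ref{empty}) through Remark \ref{polytopes} into conditions (i) and (ii). You are merely a bit more explicit than the paper (the base-point-free case, the empty-collection convention, and the uniqueness/point equivalence), but the argument is the same.
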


\begin{proof}
Assume $\{x_i=0\}\subseteq B^*(\mathcal L)$ and let $\rho_i$ be the ray of the fan of $X$ corresponding to $x_i$. 
By Theorem \ref{thm1} the polytope $\Delta_{\rho_i}^{\rho_i}(\mathcal L)$ must be a point, i.e. 
$S$ contains a unique monomial where $x_i$ appears with exponent one.

If $\{x_i=x_j=0\}\subseteq B^*(\mathcal L)$ and $\sigma=\langle \rho_i,\rho_j\rangle$,
the dimension of $\Delta_\sigma^{\rho_i},\Delta_\sigma^{\rho_j}$ is at most 0 by Remark \ref{ambient}.
By Theorem \ref{thm1} one of them has to be not empty. 
Thus there must be a monomial in $S$ 
containing one variable among $x_i,x_j$ with exponent one 
and not containing the other one.
\end{proof}

Observe that a quasismooth curve is smooth,
 since quasismooth implies normal (see the proof of \cite[Proposition 2.4]{ACG}).
However the converse is false, as the following example shows.

\begin{example}
Let $X=\pp(2,3,5)$, whose fan can be taken to have rays $e_1,e_1+5e_2, -e_1-3e_2$,
and let $Y$ be the curve defined by $x^3-y^2=0$.
Clearly the curve is not quasismooth. In order to prove that $Y$ is smooth, it is enough 
to check smoothness at the point $(0,0,1)$. 
The affine chart containing such point is $U_{\sigma}$, 
where $\sigma={\rm cone}(e_1,e_1+5e_2)$, thus it is given by the closure of the 
image of the map
\[
\varphi_{\sigma}:(\cc^*)^2\to \cc^3,\ (t_1,t_2)\mapsto (t_1,t_2, t_1^5t_2^{-1}).
\]
An easy computation shows that the curve $Y$ is the closure of the image of the 
one parameter subgroup $t\mapsto (t,t^3)$, 
thus the curve $Y\cap U_{\sigma}$ is the closure of the image of the map $t\mapsto (t,t^3,t^2)$,
which is smooth. 
\end{example}

\begin{theorem} A  monomial linear system $\mathcal L$ of 
surfaces in a simplicial projective toric threefold $X$ with monomial basis $S$ 
is quasismooth if and only if either $\mathcal L$ is base point free or 
 the following hold
\begin{enumerate}
\item if $\{x_i=0\}\subseteq B^*(\mathcal L)$ then  $S$ contains a unique monomial  where $x_i$ appears with exponent one;
\item if $\{x_i=x_j=0\}\subseteq B^*(\mathcal L)$ then 
\begin{itemize}
\item either $S$ contains a monomial $x^{a}$ with $a_i=1, a_j=0$
and a monomial $x^b$ with $b_i=0,b_j=1$, 
\item or 
the exponent of $x_i$ is either $0$ or $\geq 2$ in each monomial of $S$ and
there exists a unique monomial $m=x^a$ with $a_i=0, a_j=1$. 
\end{itemize}
\item if $\{x_i=x_j=x_k=0\}\subseteq B^*(\mathcal L)$ then $S$ contains a monomial $x^a$ where $a_i+a_j+a_k=1$.
\end{enumerate}
\end{theorem}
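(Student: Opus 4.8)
The plan is to apply Theorem \ref{thm1} case by case, according to the dimension of the stratum $D_\sigma\subseteq B^*(\mathcal L)$, using that $X$ is a simplicial threefold, so $\dim p(D_\sigma)=\dim D_\sigma-1$ (the Picard number does not enter directly, but $\qq$-factoriality forces the codimension-one toric strata to behave like those in weighted projective spaces). By Corollary \ref{vert} we may work with the Newton polytope, and by Remark \ref{ambient} each nonempty polytope $\Delta_\sigma^\rho(\mathcal L)$ lives in a linear space of dimension $\dim p(D_\sigma)$. First I would dispose of the base-point-free case, which is always quasismooth, and then assume $B(\mathcal L)\neq\emptyset$.

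For a ray $\sigma=\langle\rho_i\rangle$ with $\{x_i=0\}\subseteq B^*(\mathcal L)$, we have $\dim p(D_\sigma)=2-1=1$? No: $D_\sigma$ has dimension $r-1$ but $p(D_\sigma)$ is a divisor in the threefold $X$, so $\dim p(D_\sigma)=2$. Wait — the relevant ambient dimension is that of $Q^{-1}(w)\cap\{m_i=0\}$, which equals $\dim p(D_\sigma)$. Since $X$ is a threefold with Picard number one is not assumed; only simpliciality is. So $\dim p(D_\sigma) = \dim D_\sigma - \rk \Cl(X)$; but as $X$ is not necessarily Picard rank one, I should instead argue directly: $p(D_\sigma)$ is a toric subvariety of $X$ of the expected dimension $3-1=2$ for a ray, $3-2=1$ for a two-dimensional cone, $3-3=0$ for a three-dimensional cone. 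For a three-dimensional $\sigma$, the polytopes $\Delta_\sigma^\rho(\mathcal L)$ sit in a $0$-dimensional space, so nonemptiness of at least one (i.e. existence of a monomial with $a_i+a_j+a_k=1$) is exactly the dependence condition, giving (iii). For a two-dimensional $\sigma=\langle\rho_i,\rho_j\rangle$, the polytopes lie in a line; the collection of nonempty ones is dependent iff there are at least two of them (two segments or points in a line are always dependent) or there is exactly one and it is a point — but a segment alone is independent. A single nonempty $\Delta_\sigma^{\rho_j}$ being a point means: exactly one monomial with $a_i=0,a_j=1$, and moreover no monomial with $a_i=1,a_j=0$ exists, and the point condition forces all monomials using $x_i$ to use it with exponent $\geq 2$ (otherwise $\Delta_\sigma^{\rho_i}$ would be a nonempty set contributing a second polytope, or the polytope $\Delta_\sigma^{\rho_j}$ would acquire extra lattice points — here I must unwind Remark \ref{polytopes} carefully). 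This reproduces alternative (ii).

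For a ray $\sigma=\langle\rho_i\rangle$, the single polytope $\Delta_\sigma^{\rho_i}(\mathcal L)$ lives in a $2$-dimensional space (a surface in $X$), and a single polytope forms a dependent collection iff it is $0$-dimensional, i.e. a point, which says $S$ contains a unique monomial in which $x_i$ appears with exponent one — this is (i). I would also need to check that when $\{x_i=0\}\subseteq B^*(\mathcal L)$ the substrata behave consistently, but Theorem \ref{thm1} already requires checking all $\sigma$, so each bullet in the statement corresponds to exactly one type of cone and the conditions are independent. The main obstacle I anticipate is the careful translation in case (ii): showing that ``the single nonempty polytope $\Delta_\sigma^{\rho_j}$ is a point'' is equivalent to the stated combinatorial condition ``the exponent of $x_i$ is $0$ or $\geq 2$ in each monomial, and there is a unique monomial with $a_i=0,a_j=1$'', which requires using Remark \ref{polytopes} to see that $\Delta_\sigma^{\rho_j}+e_{\rho_j}$ is the convex hull of lattice points $m\in\Delta(f)$ with $m_{\rho_j}=1$, $m_{\rho_i}=0$, and that its being a point is equivalent to there being exactly one such monomial while simultaneously ruling out any monomial with $m_{\rho_i}=1$, $m_{\rho_j}=0$. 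The remaining cases are routine dimension counts.

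\begin{proof}
By Corollary \ref{vert} we may replace $\mathcal L$ by the linear system generated by the monomials at the vertices of $\Delta(\mathcal L)$, and by Lemma \ref{bs} this does not change $B^*(\mathcal L)$. If $\mathcal L$ is base point free it is quasismooth by the proposition in Section \ref{sec-qs}. So assume $B(\mathcal L)\neq\emptyset$ and apply Theorem \ref{thm1}: $\mathcal L$ is quasismooth if and only if for every $\sigma\in\Sigma$ with $D_\sigma\subseteq B^*(\mathcal L)$ the collection of nonempty polytopes $\Delta_\sigma^\rho(\mathcal L)$, $\rho\in\sigma(1)$, is dependent. Since $X$ is a simplicial threefold, for $\sigma$ of dimension $1,2,3$ the toric subvariety $p(D_\sigma)$ has dimension $2,1,0$ respectively, hence by Remark \ref{ambient} the polytopes $\Delta_\sigma^\rho(\mathcal L)$ lie in a linear space of that dimension.

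Case $\dim\sigma=3$, say $\sigma=\langle\rho_i,\rho_j,\rho_k\rangle$ with $\{x_i=x_j=x_k=0\}\subseteq B^*(\mathcal L)$. The polytopes lie in a $0$-dimensional space, so a collection of them is dependent precisely when it is nonempty. By Remark \ref{polytopes} some $\Delta_\sigma^\rho(\mathcal L)$ is nonempty if and only if $S$ contains a monomial $x^a$ with $a_i+a_j+a_k=1$; this is (iii).

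Case $\dim\sigma=1$, say $\sigma=\langle\rho_i\rangle$. There is a single polytope $\Delta_\sigma^{\rho_i}(\mathcal L)$, sitting in a $2$-dimensional space. A collection consisting of one polytope is dependent if and only if that polytope is $0$-dimensional, i.e. a point. By Remark \ref{polytopes}, $\Delta_\sigma^{\rho_i}(\mathcal L)+e_{\rho_i}$ is the convex hull of the lattice points $m\in\Delta(f)$ with $m_{\rho_i}=1$, which are exactly the exponent vectors of the monomials of $S$ in which $x_i$ appears with exponent one; this set is a single point if and only if there is a unique such monomial. This is (i).

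Case $\dim\sigma=2$, say $\sigma=\langle\rho_i,\rho_j\rangle$ with $\{x_i=x_j=0\}\subseteq B^*(\mathcal L)$. The polytopes $\Delta_\sigma^{\rho_i}(\mathcal L)$ and $\Delta_\sigma^{\rho_j}(\mathcal L)$ lie in a line, and at least one is nonempty since $D_\sigma\subseteq B^*(\mathcal L)$. If both are nonempty the collection is automatically dependent, since any two nonempty polytopes in a line are degenerate; by Remark \ref{polytopes} this is the case ``$S$ contains $x^a$ with $a_i=1,a_j=0$ and $x^b$ with $b_i=0,b_j=1$''. If exactly one is nonempty, say $\Delta_\sigma^{\rho_j}(\mathcal L)$, the collection is dependent if and only if this single polytope is a point. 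Now $\Delta_\sigma^{\rho_i}(\mathcal L)=\emptyset$ means there is no monomial of $S$ with $a_i=1,a_j=0$, i.e. the exponent of $x_i$ in every monomial of $S$ is $0$ or $\geq 2$; and by Remark \ref{polytopes}, $\Delta_\sigma^{\rho_j}(\mathcal L)+e_{\rho_j}$ is the convex hull of the exponent vectors of the monomials of $S$ with $a_i=0,a_j=1$, which is a point if and only if there is a unique such monomial. This is the second alternative of (ii). Conversely, if the exponent of $x_i$ is $0$ or $\geq2$ in each monomial and there is a unique monomial with $a_i=0,a_j=1$, then $\Delta_\sigma^{\rho_i}(\mathcal L)=\emptyset$ and $\Delta_\sigma^{\rho_j}(\mathcal L)$ is a point, so the collection is dependent. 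Thus for a two-dimensional $\sigma$ the dependence condition is equivalent to the dichotomy in (ii).

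Collecting the three cases, Theorem \ref{thm1} gives that $\mathcal L$ is quasismooth if and only if (i), (ii), (iii) all hold.
\end{proof}
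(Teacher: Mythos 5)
Your route is the same as the paper's: reduce to Theorem \ref{thm1} and go cone by cone, using Remark \ref{ambient} for the dimension of the ambient space of the polytopes and Remark \ref{polytopes} to turn ``$\Delta_\sigma^\rho(\mathcal L)$ empty/a point'' into conditions on the monomials of $S$. Your treatment of the rays and of the three-dimensional cones is correct and in fact more explicit than the paper's (which only records the forward implications). The problem is in the two-dimensional case, and it is a genuine one: you write that $\Delta_\sigma^{\rho_i}(\mathcal L)=\emptyset$, i.e.\ the absence of a monomial with $a_i=1,\ a_j=0$, ``i.e.\ the exponent of $x_i$ in every monomial of $S$ is $0$ or $\geq 2$''. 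Only one implication of that ``i.e.''\ holds: a monomial with $a_i=1$ and $a_j\geq 1$ is perfectly compatible with $\Delta_\sigma^{\rho_i}(\mathcal L)=\emptyset$ (it contributes to no polytope attached to $\sigma$) but violates the exponent condition in the statement. So your proof of the direction ``quasismooth $\Rightarrow$ (ii)'' breaks exactly there, and the gap cannot be closed as written: what Theorem \ref{thm1} gives for a two-dimensional cone is ``either both $\Delta_\sigma^{\rho_i},\Delta_\sigma^{\rho_j}$ are nonempty, or the unique nonempty one is a point'', which is strictly weaker than the literal condition (ii). Concretely, on $\pp^1\times\pp^1\times\pp^1$ the system spanned by $x_0^2y_0y_1,\ x_0x_1y_1^2,\ x_1^2y_0^2,\ x_1^2y_1^2$ has base strata only over the cones containing $\langle \rho_{x_1},\rho_{y_0}\rangle$ or $\langle \rho_{x_1},\rho_{y_1}\rangle$, satisfies the criterion of Theorem \ref{thm1} at each of them (so it is quasismooth, as one also checks directly from the partial derivatives), and yet for $\{x_1=y_1=0\}$ neither bullet of (ii) holds, because of the monomial $x_0x_1y_1^2$ whose $x_1$-exponent is $1$. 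The correct second alternative is ``no monomial has $a_i=1$ and $a_j=0$, and there is a unique monomial with $a_i=0,\ a_j=1$''; with that formulation your case analysis goes through verbatim.

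Two further remarks. First, to be fair, the paper's own proof is loose at exactly the same spot: it only asserts that the unique nonempty polytope must be a point (phrased as ``a unique monomial where $x_j$ appears with exponent one'') and never addresses the clause about the exponents of $x_i$; so your write-up mirrors the published argument, but since you explicitly assert the false equivalence it has to be flagged. Second, a smaller slip: you claim ``at least one of $\Delta_\sigma^{\rho_i},\Delta_\sigma^{\rho_j}$ is nonempty since $D_\sigma\subseteq B^*(\mathcal L)$''. This is false (e.g.\ if every monomial involves $x_i^2$, $x_ix_j$ or $x_j^2$ both polytopes are empty); the both-empty case must be listed separately, where the collection is independent, hence $\mathcal L$ is not quasismooth by Corollary \ref{empty}, and both bullets of (ii) fail, so the equivalence is unaffected once the case is added.
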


\begin{proof}
Assume $\{x_i=0\}\subseteq B^*(\mathcal L)$ and let $\rho_i$ be the ray of the fan of $X$ corresponding to $x_i$. 
By Theorem \ref{thm1} the polytope $\Delta_{\rho_i}^{\rho_i}(\mathcal L)$ must be a point, i.e.
$S$ contains a unique monomial where $x_i$ appears with exponent one.

If $\{x_i=x_j=0\}\subseteq B^*(\mathcal L)$ and $\sigma=\langle \rho_i,\rho_j\rangle$,
we distinguish whether $\Delta_\sigma^{\rho_i},\Delta_\sigma^{\rho_j}$ are both not empty
or one of them is empty, say $\Delta_\sigma^{\rho_i}=\emptyset$.
The first situation implies the existence of a monomial containing $x_i$ with exponent one and not containing $x_j$ 
and of a monomial containing $x_j$ with exponent one and not containing $x_i$. 
In the second case by Theorem \ref{thm1} the only non-empty polytope $\Delta_\sigma^{\rho_j}$ 
must be a point, i.e. $S$ contains a unique monomial where $x_j$ appears with exponent one.

If $\{x_i=x_j=x_k=0\}\subseteq B^*(\mathcal L)$ and $\sigma=\langle \rho_i,\rho_j,\rho_k\rangle$, 
by Theorem \ref{thm1} one of the polytopes $\Delta_{\sigma}^{\rho_i}(\mathcal L)$, which are either points or empty by Remark \ref{ambient},
must be not empty.
\end{proof}

 \bibliographystyle{plain}
\bibliography{Biblioqs}

\end{document}